\documentclass[11pt]{article}

\usepackage[utf8]{inputenc}
\usepackage[T1]{fontenc}
\usepackage[a4paper,margin=1in]{geometry}
\usepackage{microtype}
\usepackage{mathtools}
\usepackage{cite}
\usepackage{amsmath,amssymb,amsfonts}
\usepackage{amsthm}
\usepackage{graphicx}
\usepackage{algorithm,algorithmic}
\usepackage{hyperref}
\hypersetup{hidelinks}
\usepackage{textcomp}

\newtheorem{rmk}{Remark}[section]
\newtheorem{proposition}{Proposition}[section]
\newtheorem{theorem}{Theorem}[section]
\newtheorem{lem}{Lemma}[section]   
\newtheorem{definition}{Definition}[section]

\def\BibTeX{{\rm B\kern-.05em{\sc i\kern-.025em b}\kern-.08em
T\kern-.1667em\lower.7ex\hbox{E}\kern-.125emX}}

\begin{document}

\title{Exponential stability of the {linearized} viscous Saint-Venant equations using a quadratic Lyapunov function}

\author{
Amaury Hayat\textsuperscript{1,2} and Nathan Lichtlé\textsuperscript{1,3}\\[0.5em]
\small \textsuperscript{1}Ecole Nationale des Ponts et Chaussées, Institut Polytechnique de Paris\\
\small \textsuperscript{2}Korean Institute for Advanced Study\\
\small \textsuperscript{3}University of California, Berkeley\\
\small \texttt{amaury.hayat@enpc.fr}, \texttt{nathan.lichtle@enpc.fr}
}
\date{}

\maketitle 

\begingroup
\renewcommand\thefootnote{}
\footnotetext{Published in \emph{IEEE Transactions on Automatic Control}; DOI: 10.1109/TAC.2025.3644127.}
\addtocounter{footnote}{-1}
\endgroup

\begin{abstract}

In this work, we investigate the exponential stability of the viscous Saint-Venant equations by adding to the standard hyperbolic Saint-Venant equations a viscosity term coming from the higher order approximation of the Saint-Venant equations from Navier-Stokes equations. The inclusion of viscosity transforms these equations into more complex second-order partial differential equations, accurately modeling the behavior of real-world fluids that inherently possess viscosity. We construct an explicit quadratic Lyapunov function and demonstrate that it must be diagonal in physical coordinates, revealing that certain quadratic Lyapunov functions effective in non-viscous cases become inadequate when viscosity is introduced. {We find explicit sufficient conditions on the parameters of the boundary conditions} such that for small viscosities a quadratic Lyapunov function exists. This result ensures the exponential stability of the linearized system around the steady-state solutions in the $L^2$ norm. 

\end{abstract}

\noindent\textbf{Keywords:} Stability, Saint-Venant equations, Navier-Stokes, viscosity, boundary control, Lyapunov stability.

\section{Introduction}

Introduced in 1871 by Barré de Saint-Venant, the Saint-Venant equations \cite{de1871theorie} have become widely used in the fields of hydraulics and fluid dynamics. These equations describe the one-dimensional evolution of a fluid, more specifically an incompressible flow, in a unidirectional open channel. They are significantly easier to solve than the two-dimensional shallow water equations, while remaining able to quite accurately model the dynamics of flows in open channels of arbitrary cross section, and have thus been playing a crucial role in the analysis of open-channel flows such as navigable rivers and canals in many practical applications, such as water level and flow rate control \cite{halleux2003boundary,gugat2003global,leugering2002modelling},{\cite{gugat2009global}}, \cite{gu2011exact,li2009exact,prieur2018boundary}. 

The Saint-Venant equations consist of two hyperbolic partial differential equations, describing the evolution of the fluid's depth and horizontal velocity across one-dimensional space and time, yielding a $2 \times 2$ nonlinear hyperbolic system. The goal is usually to prove the exponential stability of the steady states, meaning that deviations from an equilibrium state will diminish over time at an exponential rate. This is a highly valued property in practice because it ensures that the system quickly and predictably returns to a stable state after disturbances, which are ubiquitous in real-world applications.

Many tools have been developed over time for the problem of boundary feedback stabilization for such one-dimensional hyperbolic systems. \cite{GREENBERG198466} used the method of characteristics for nonlinear $2 \times 2$ homogeneous systems with $C^1$ solutions, later generalized to $n \times n$ homogeneous systems \cite{Qin}. For inhomogeneous hyperbolic systems, the backstepping method, introduced in \cite{doi:10.1137/1.9780898718607}, is a powerful tool for exponential stabilization \cite{KRSTIC2008750,6160338,Meglio2013StabilizationOA}. It has been used to prove the exponential stability of the linearized Saint-Venant-Exner equations with arbitrary slope or friction \cite{7402381}, as well as to stabilize a linearized bilayered Saint-Venant model \cite{DIAGNE2016130}. This method allows for exponential stabilization at an arbitrary rate, however it requires full-state feedback. In some cases, an observer can be designed to address this issue \cite{6160338,7402381,dimeglio:hal-01499940}. {\cite{arfaoui2011boundary} considers the stabilization of the viscous Saint-Venant equations using moving walls as a boundary control. More generally, \cite{bastin2023diffusion,bastin2025usefulness} study how viscosity influences the robustness of stability in hyperbolic systems with boundary feedbacks.}

In this work, we consider the method of quadratic Lyapunov functions, originally introduced in \cite{coron1999lyapunov}, and later extended to work for nonlinear hyperbolic systems of conservation laws \cite{coron2007strict,coron2008dissipative}. This method has the advantage of only requiring measurements at the boundaries, which is an important consideration for practical implementations. We build such Lyapunov functions as linear combinations of small perturbations around the steady states, which, coupled with appropriate boundary conditions, can be used to prove the exponential stability of the system around said steady states. \cite{bastin2017quadratic} uses such quadratic Lyapunov functions to show the exponential stability in $H^2$-norm of $2 \times 2$ hyperbolic systems such as Saint-Venant with nonuniform steady states. \cite{hayat2019quadratic} later applies a similar approach to a more general class of Saint-Venant equations with arbitrary friction and space-varying slope, then to generic density-velocity systems on a bounded domain \cite{hayat2021exponential}.{\cite{gugat2012h} constructs a Lyapunov function to establish stability in the $H^2$-norm for the isothermal Euler equations, which model gas flow in pipelines and share structural similarities with the viscous Saint-Venant equations considered here. In the case with friction, \cite{dick2011strict} derives a Lyapunov function to demonstrate stability in the $H^1$-norm.}{\cite{karafyllis2022spill,karafyllis2022feedback} address the problem of feedback stabilization for the transport of a tank containing liquid modeled by the viscous Saint-Venant equations, employing control Lyapunov functionals to design boundary feedback laws.}

We consider the Saint-Venant equations that have been augmented with a viscosity term $\mu$. Viscosity is a fundamental property of fluids that quantifies the internal friction force within them. In other words, viscosity measures the fluid's resistance to flow. Since the large majority of fluids we encounter in normal conditions have non-zero viscosity which will affect their flow behavior, taking viscosity into consideration is essential in understanding the dynamics of real-world flows. To this end, we consider the viscous Saint-Venant model derived in \cite{gerbeau2000derivation} from the Navier-Stokes system. \cite{mascia2005asymptotic} previously showed the asymptotic stability of the steady states of this model in the case where the equilibrium water speed is equal to 0. 

Our main objective in this work is to demonstrate the exponential stability of the linearized viscous Saint-Venant system, while providing constraints on the Lyapunov stability that characterize the system's behavior. Exponential stability is a desirable property as it guarantees the system's response will exponentially tend towards an equilibrium state, ensuring predictability and robustness in practical applications. Our approach involves linearizing the system around an equilibrium state, then constructing a quadratic Lyapunov function that provides valuable insights into the system's stability and convergence properties. Under a suitable choice of boundary conditions, we prove that the perturbations tend to zero exponentially fast in $L^2$ norm when the viscosity term is sufficiently small. Finally, we note that quadratic Lyapunov functions are often robust to non-linearities~\cite{bastin2016stability,hayat2023pi}, which is encouraging for the stability of the non-linear system.

The rest of the paper is organized as follows: in Section~\ref{sec:description}, we present the viscous Saint-Venant system that we consider, and perform a linearization to simplify our analysis. Section~\ref{sec:results} consists of our main results regarding the exponential stabi lity of the linearized viscous system, which are proved in Section~\ref{sec:exp_stab} along with our analytical findings. Finally, we conclude and discuss future work in Section~\ref{sec:conclusion}.

\section{Description of the viscous Saint-Venant system and the linearized system}
\label{sec:description} 

We consider the one-dimensional Saint-Venant model with viscosity, given by the following equations \cite{gerbeau2000derivation}:
\begin{align}
&H_t + (HV)_x = 0,
\label{eq:sv_nonlinear_1} \\
&\displaystyle V_t + VV_x + gH_x + \frac{f(H, V)}{H} - 4 \mu  \frac{(HV_x)_x}{H} = 0,
\label{eq:sv_nonlinear_2}
\end{align}
where $t \in [0, +\infty)$, $x \in [0,L]$, $H(t,x) : [0,+\infty) \times [0,L] \to (0,+\infty)$ is the water depth, $V(t,x) : [0,+\infty) \times [0,L] \to (0,+\infty)$ is the horizontal water speed and $g>0$ is the gravitational acceleration constant. 

In \eqref{eq:sv_nonlinear_2}, $\mu > 0$ is a viscosity coefficient and the term $f(H,V)$ represents a modified friction term, given by
\begin{equation}
\label{def:f}
    f(H,V) = \frac{\kappa V}{1 + \displaystyle \frac{\kappa H}{3\mu}},
\end{equation}
where $\kappa > 0$ denotes a small friction term. The derivation of the viscous Saint-Venant model from the Navier-Stokes equations can be found in \cite{gerbeau2000derivation}, where we divided the second equation by $H$ to arrive at (\ref{eq:sv_nonlinear_1},~\ref{eq:sv_nonlinear_2}).

{
\begin{rmk}
    The model \eqref{eq:sv_nonlinear_1}, \eqref{eq:sv_nonlinear_2}, derived from the Navier-Stokes equations 
    {in}~\cite{gerbeau2000derivation}, is the reference formulation for the viscous Saint-Venant equations. The derivation assumes a flat-bottom, rectangular channel with constant width and focuses on the laminar flow regime, resulting in a friction term that is linear in velocity. 
    Moreover, while the present work considers the flat-bottom case, the extension to sloped channels could still fall within the scope of our approach, especially when friction remains dominant. However, when the slope dominates, the situation is less clear, as previous work in the inviscid case relies on a Lyapunov function that does not directly generalize to the viscous case (see Remark~\ref{rmk:limitations}).
\end{rmk}
}

We consider an equilibrium $(H^*, V^*) : [0,L] \to (0, +\infty)^2$, that is, a time-invariant solution of the system defined by \eqref{eq:sv_nonlinear_1}, \eqref{eq:sv_nonlinear_2} and we 
assume that the system is in fluvial (i.e. subcritical) regime, that is 
\begin{equation}
    gH^* > V^{*2}.
    \label{eq:subcritical}
\end{equation}

\begin{rmk}[Subcritical flow] 
We consider the fluvial (i.e. subcritical) regime as it is often the most interesting and common case for navigable rivers, 
wherein waves can propagate both downstream and upstream~\cite{hayat2019boundary}. In the other case, the propagation speeds of the transport term have the same sign and the situation is easier to handle (see for instance \cite{hayat2021exponential}).
\end{rmk}

Our goal is to
find conditions of exponential stability of the linearized system around the steady state under some boundary conditions of the form :
\begin{equation}
\begin{split}
V(t,0) &= \mathcal B ( H(t,0), \mu, 0 ) \\
V(t,L) &= \mathcal B ( H(t,L), \mu, V_x(t,L) ) \\
V_x(t,0) &= 0
\end{split}
\label{eq:nonlinear_bc}
\end{equation} 
where $\mathcal B \in C^2( \mathbb R^3 ; \mathbb R)$ can correspond for instance to a control feedback law. {In particular, we are interested in seeing if a perturbed version of the most recent Lyapunov functions introduced in \cite{hayat2019quadratic} and \cite{bastin2017quadratic} can still work in this framework. Interestingly, we will show that perturbations of the Lyapunov function of \cite{hayat2019quadratic} do not work anymore, while the one found in \cite{bastin2017quadratic} can still be used.}

\begin{rmk}
The last condition in \eqref{eq:nonlinear_bc} implies in particular that the steady-state satisfies $V_{x}^{*}(0)=0$. This can be explained as specifying another value would destroy any hope of seeing $V^{*}$ converge (in $C^{1}([0,L])$-norm) when $\mu\rightarrow 0$, since the steady-states are constant functions when $\mu=0$.
\end{rmk}

We introduce the following proposition:

\begin{proposition}
\label{prop:estimVx}
For any $H_{0}>0$, $V_{0}>0$ satisfying
\begin{equation}
    gH_{0}-V_{0}^{2}>0,
\end{equation}
there exists a $\mu^{*}>0$ {(depending on $H_{0}$, $V_{0}$)} such that for any $\mu\in(0,\mu^{*})$, there is a unique steady-state $(H^{*},V^{*})\in C^{2}([0,L];(0,\infty))^{2}$  to \eqref{eq:sv_nonlinear_1}--\eqref{eq:sv_nonlinear_2} satisfying {\eqref{eq:subcritical} and} the last equation of \eqref{eq:nonlinear_bc} together with $H^{*}(0)=H_{0}$, $V^{*}(0)=V_{0}$. It satisfies $V^{*}_{x} \geq 0$ for all $x \in [0,L]$.
Moreover
\begin{equation}
\label{eq:estimVmu}
\begin{split}
& V^{*} = V_{0}+O(\mu),\\
    &V_{x}^{*} = \frac{V^{*}f(H^{*},V^{*})}{H^{*}(gH^{*}-{V^{*}}^{2})}-C_{0}\mu e^{-\frac{1}{4\mu}\int_{0}^{x} {\Gamma(s)} ds}+O(\mu^{2}),\\ &V_{xx}^{*} = \frac{C_{0}}{4V^{*}}\left(g H^{*}-{V^{*}}^{2}\right)e^{-\frac{1}{4\mu}\int_{0}^{x} {\Gamma(s)} ds}+ O(\mu),
    \end{split}
\end{equation}
 where $O(\mu)$ 
 (resp. $O(\mu^{2})$) refers to a function that tends to 0 at least as fast as $\mu$ (resp. $\mu^{2}$) in $L^{\infty}(0,L)$ when $\mu\rightarrow 0$, $C_{0}$ is a positive constant that only depend on $H_{0}$, $V_{0}$ and the parameters of the system and is bounded when $\mu\rightarrow 0$, and ${\Gamma}$ is a positive function that is bounded from above and below when $\mu\rightarrow 0$. Moreover $C_{0}$ is given by
 \begin{equation}
      C_{0} := \frac{1}{\mu}\left(\frac{V_{0}f(H_{0},V_{0})}{H_{0}(gH_{0}-V_{0}^{2})}\right),
 \end{equation}
 and ${\Gamma}$ is defined by
 \begin{equation}
    {\Gamma(x)} = \frac{gH^{*}(x)}{V^{*}(x)}-V^{*}(x) > 0.
 \end{equation}
 
\end{proposition}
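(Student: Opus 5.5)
We reduce the steady-state problem to a first-order ODE system and treat it as a singular perturbation problem in $\mu$. For a time-invariant solution, \eqref{eq:sv_nonlinear_1} gives $H^*V^* \equiv Q := H_0V_0$, so $H^* = Q/V^*$ and $H^*_x = -QV^*_x/{V^*}^2$. Substituting into \eqref{eq:sv_nonlinear_2} and multiplying by $H^*$, we get
\begin{equation*}
4\mu\,(H^*V^*_x)_x = H^*V^*V^*_x - \frac{gQH^*}{{V^*}^2}V^*_x + f(H^*,V^*).
\end{equation*}
The first two terms combine to $-H^*\Gamma V^*_x$ with $\Gamma = gH^*/V^* - V^*$. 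Setting $W := H^*V^*_x$, this becomes the planar system
\begin{equation*}
V^*_x = \frac{V^* W}{Q}, \qquad 4\mu W_x = -\Gamma W + f\!\left(\frac{Q}{V^*},V^*\right),
\end{equation*}
with initial data $V^*(0)=V_0$ and $W(0)=0$, which encodes $V^*_x(0)=0$. The right-hand side is smooth wherever $V^*>0$. Hence Cauchy--Lipschitz gives a unique local solution, and therefore a unique steady state with the prescribed data, once we show it lives on all of $[0,L]$ and stays subcritical. We also note that $f = O(\mu)$, since $f = \kappa V/(1+\kappa H/(3\mu)) = 3\mu V/H + O(\mu^2)$. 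This smallness of $f$ is what makes $C_0$ bounded.

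\textbf{A priori bounds and $V^*_x\ge 0$.} By Duhamel's formula in the $W$ equation,
\begin{equation*}
W(x)=\frac{1}{4\mu}\int_0^x e^{-\frac{1}{4\mu}\int_s^x\Gamma}\,f(s)\,ds.
\end{equation*}
Since $f>0$, this gives $W\ge 0$ and hence $V^*_x\ge0$. As long as $\Gamma\ge\gamma_0>0$, it also gives $0\le W\le \sup f/\gamma_0 = O(\mu)$. We then close a bootstrap argument. On the maximal interval where $|V^*-V_0|\le\delta$, with $\delta$ chosen so that subcriticality and $\Gamma\ge\gamma_0$ hold, we have $|V^*_x| = O(\mu)$. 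Therefore $|V^*-V_0|\le CL\mu<\delta$ for $\mu<\mu^*$. This yields global existence on $[0,L]$, the bound \eqref{eq:subcritical}, and $V^*=V_0+O(\mu)$.

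\textbf{Asymptotic expansion.} We write $W = f/\Gamma + R$, the quasi-steady part plus a remainder. The remainder satisfies
\begin{equation*}
4\mu R_x = -\Gamma R - 4\mu (f/\Gamma)_x,
\end{equation*}
with $R(0) = -f(0)/\Gamma(0)$. The derivative $(f/\Gamma)_x$ is $O(1)\cdot V^*_x\cdot O(\mu)$, hence $O(\mu^2)$. Using Duhamel again, the forcing contributes $O(\mu^2)$, so
\begin{equation*}
R = -\frac{f(0)}{\Gamma(0)}e^{-\frac{1}{4\mu}\int_0^x\Gamma}+O(\mu^2).
\end{equation*}
Dividing by $H^*$ and replacing $H^*$ by $H_0$ in front of the exponential term costs $O(\mu)\cdot O(\mu)$. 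We use the identity
\begin{equation*}
\frac{f}{H\Gamma}=\frac{Vf}{H(gH-V^2)}.
\end{equation*}
This gives the $V^*_x$ expansion with $C_0\mu = V_0f(H_0,V_0)/(H_0(gH_0-V_0^2))$, which matches the stated $C_0$; it is bounded because $f=O(\mu)$. For $V^*_{xx}$ we differentiate this expansion. The only $O(1)$ contribution comes from differentiating the exponential, which gives $C_0\Gamma/4\cdot e^{-\dots}$. Using $\Gamma = (gH^*-{V^*}^2)/V^*$ reproduces the stated formula. To justify the $O(\mu)$ remainder, we read $V^*_{xx}$ directly from the ODE, $W_x = (f-\Gamma W)/(4\mu)$, and insert the expansion of $W$; this avoids differentiating an $O(\mu^2)$ error.

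\textbf{Main obstacle.} The delicate point is the uniform remainder control in this last step. The boundary-layer term must be tracked precisely: errors of order $\mu^2$ in $W$ become $O(\mu)$ in $W_x$ after division by $\mu$. To control them, we would bound $(f/\Gamma)_x$ carefully using $V^*_x=O(\mu)$ and $f=O(\mu)$. We would also bound the difference between $\Gamma(s)$ and $\Gamma(0)$ inside the exponent, using $\Gamma(x)-\Gamma(0)=O(\mu)$ together with the exponential decay on the scale $\mu$.
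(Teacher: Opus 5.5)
Your proposal is correct and follows essentially the paper's argument: you use $H^*V^*=Q_0$ to reduce to a singularly perturbed first-order system, apply Duhamel's formula with decay rate $\Gamma/(4\mu)$, run a continuation argument to get $V^*_x=O(\mu)$ on $[0,L]$, split into the quasi-steady part $V^*f/(H^*(gH^*-{V^*}^2))$ plus a boundary layer, and read $V^*_{xx}$ off the equation. The only real difference is your fast variable $W=H^*V^*_x$ instead of the paper's $z=V^*_x$, which removes the quadratic term $\mu z^2/y$ so that Duhamel's formula is exact and gives $V^*_x\ge 0$ directly from $f>0$, rather than through the paper's sign argument at zeros of $z$.
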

\begin{rmk}
    Note that $f(H,V) / \mu$ is bounded when $\mu\rightarrow 0^{+}$ and hence $C_{0}$ is.
\end{rmk}
The existence of solutions and the estimates \eqref{eq:estimVmu} are not obvious a priori: indeed, the equations satisfied by the steady-states of \eqref{eq:sv_nonlinear_1}--\eqref{eq:sv_nonlinear_2} are singular ODEs (see \eqref{eq:sys1HV}) which can lead to very different behaviors when $\mu>0$ and $\mu=0$. {For example, $V_{xx}^{*}(0)$ converges to a non-vanishing constant when $\mu\rightarrow 0^{+}$, while $V_{x}^{*}=V_{xx}^{*}=0$ when $\mu=0$.} Proposition~\ref{prop:estimVx} is shown in Appendix \ref{app:estimVx}. 

We define the deviation of the state $(H,V)$ from the equilibrium $(H^{*},V^{*})$ as
\begin{equation}
    \begin{split}
        h(t,x) &\coloneqq H(t,x) - H^*(x), \\
        v(t,x) &\coloneqq V(t,x) - V^*(x).        
    \end{split}
\end{equation}
The linearized system around the steady states is then given by the following equation in matrix form:
\begin{equation}
A \begin{pmatrix}h \\ v\end{pmatrix}_{xx} + \begin{pmatrix}h \\ v\end{pmatrix}_t + B(x) \begin{pmatrix}h \\ v\end{pmatrix}_x + C(x) \begin{pmatrix}h \\ v\end{pmatrix} = 0,
\label{eq:linear_system}
\end{equation}
where
\begin{subequations}
\begin{equation}
A = \begin{pmatrix}0 & 0 \\ 0 & -4\mu \end{pmatrix},
\label{eq:linear_A}
\end{equation}
\begin{equation}
B = \begin{pmatrix}V^* & H^* \\ g - 4\mu \frac{V^{*}_x}{{H^{*}}} \;& \;{V^{*}} - 4\mu \frac{H^{*}_x}{{H^{*}}} \end{pmatrix},
\label{eq:linear_B}
\end{equation}
\begin{equation}
C = \begin{pmatrix} V^{*}_x & H^{*}_x \\ 
 {\frac{4\mu {H^{*}_x}{V^{*}_x} - f(H^{*}, V^{*})}{{H^{*}}^2} - \frac{f(H^{*}, V^{*})^{2}}{3 \mu {H^{*}} {V^{*}}}} & {{V^{*}_x} + \frac{f(H^{*}, V^{*})}{{H^{*}}  V^{*}}} \end{pmatrix}.
\label{eq:linear_C}
\end{equation}
\end{subequations}

{Note that $H^*, V^*, H^*_x, V^*_x$, and $V^*_{xx}$ are all functions of the space variable $x \in [0, L]$. For clarity of notation, we omit this dependence in the rest of the article.} The linearization of the boundary conditions \eqref{eq:nonlinear_bc} is given by:
\begin{equation}
\begin{split}
v(t,0) &= -b_0 h(t,0), \\
v(t,L) &= b_1 h(t,L) + \mu c_1 v_x(t,L), \\
v_x(t,0) &= 0,
\end{split}
\label{eq:linear_bc}
\end{equation}
where the coefficients are given as
\begin{equation}
\begin{split}
b_0 &= \frac{\partial \mathcal B}{\partial H}(H^*(0),\mu,0), \\
b_1 &= \frac{\partial \mathcal B}{\partial H}(H^*(L),\mu,V_x^*(L)), \\
\mu c_1 &= \frac{\partial \mathcal B}{\partial V_x}(H^*(L),\mu,V_x^*(L)).
\end{split}
\end{equation}

{We prove in Appendix~\ref{app:well_posedness_linear} that the linearized viscous system \eqref{eq:linear_system} with boundary conditions \eqref{eq:linear_bc} is well-posed.}

\section{Main results}
\label{sec:results}

We first recall the definition of the exponential stability:

\begin{definition}[Exponential stability]
The linearized system \eqref{eq:linear_system} with boundary conditions \eqref{eq:linear_bc} is exponentially stable 
if there exists $\gamma > 0$ and $C > 0$ such that for every initial condition $(h_0, v_0)^T \in L^2((0, L); \mathbb{R}^2)$, the Cauchy problem \eqref{eq:linear_system}, \eqref{eq:linear_bc} with initial condition $(h_0, v_0)$ has a unique solution $(h(t,x), v(t,x))\in C^{0}([0,+\infty); L^{2}(0,L))$, which satisfies exponential decay towards the origin starting from its initial condition:
\begin{equation}
\begin{split}
    \lVert (h(t, \cdot), v(t, \cdot))^T \rVert_{L^2((0, L) ; \mathbb R^2)} 
    \leq \;  C e^{-\gamma t} \lVert (h_0, v_0)^T \rVert_{L^2((0, L) ; \mathbb R^2)},
\end{split}
\end{equation}
for all $t \in [0, +\infty)$.
\end{definition}

There have been many results on the boundary stabilization of density-velocity systems such as Saint-Venant, most of them by finding appropriate 
basic quadratic Lyapunov functions, which for the $L^2$-norm can be defined as ${W}(y(t, \cdot)) = \int_0^L y^T(t,x) Q y(t,x) \, dx$ where $u=(h,v)$ is the state of the system.

In this work, we investigate the robustness of these results when a small viscosity term, likely present in real-world shallow water systems, is added. We {first} show that when there is viscosity, a basic quadratic Lyapunov function {for the linear system} must necessarily be diagonal in physical coordinates.

\begin{proposition}
Let ${W}$ be a functional on $L^{2}(0,L)$ defined by
\begin{equation}
    {W}(h,v) = \int_{0}^{L} \begin{pmatrix}h{(x)}\\ v{(x)}\end{pmatrix}^{T} Q{(x)} \begin{pmatrix}h{(x)}\\ v{(x)}\end{pmatrix} dx.
\end{equation}
If ${W}$ is a Lyapunov function for the system \eqref{eq:linear_system} in the $L^{2}$ norm, 
then $Q$ is diagonal.
\label{prop:Q_diag}
\end{proposition}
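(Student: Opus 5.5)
Write $Q=\begin{pmatrix} q_1 & q_2\\ q_2 & q_3\end{pmatrix}$, which we may take symmetric since only its symmetric part enters $W$. The plan is to compute $\frac{dW}{dt}$ along smooth solutions of \eqref{eq:linear_system}, where the key structural fact is that $A$ acts only on the $v$-component: the equation for $v$ contains $4\mu v_{xx}$, while the equation for $h$ is first order. Substituting $y_t=-Ay_{xx}-By_x-Cy$ into $\frac{dW}{dt}=2\int_0^L y^TQy_t\,dx$ gives, with $y=(h,v)^T$, a term $8\mu\int_0^L (q_2 h+q_3 v)v_{xx}\,dx$ together with first-order terms $-2\int_0^L y^TQBy_x\,dx$ and zeroth-order terms. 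We integrate the second-order term by parts once:
\[
8\mu\int_0^L (q_2h+q_3v)v_{xx}\,dx = 8\mu\big[(q_2h+q_3v)v_x\big]_0^L - 8\mu\int_0^L (q_{2,x}h+q_2h_x+q_{3,x}v+q_3v_x)v_x\,dx.
\]
This produces the term $-8\mu\int_0^L q_2\,h_xv_x\,dx$, which is bilinear in $h_x$ and $v_x$. The only other occurrence of $h_x$ comes from the first-order part $-2\int y^TQBy_x$, which is linear in $h_x$ and multiplied by $h$ or $v$. The negative-definite dissipation available is only $-8\mu\int q_3v_x^2$, and nothing controls $h_x^2$.

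The core argument is then a test-function contradiction. Suppose $q_2(x_0)\neq 0$ at some interior $x_0$; by continuity $q_2$ has a fixed sign on a neighborhood $I$. Take initial data $(h,v)$ compactly supported in $I$, so that all boundary terms vanish and the boundary conditions \eqref{eq:linear_bc} are trivially satisfied. Choose $v=\varepsilon\varphi$ and $h=\psi(x)\sin(kx)$, with $\varphi,\psi$ smooth bumps. Then:
\begin{itemize}
\item the cross term $-8\mu\int q_2h_xv_x$ has size $k\varepsilon$;
\item the $h_x$ terms from $B$, after integrating $h\,h_x$ by parts, give quantities of size $O(1)$ or $O(k\varepsilon)$ times $v$ without $v_x$;
\item $v_x^2$ contributes $O(\varepsilon^2)$;
\item $W$ itself is $O(1)$.
\end{itemize}
A cleaner choice is to take $h_x$ aligned with $-\operatorname{sign}(q_2)v_x$ at high frequency, e.g. $h=\psi\sin(kx)$ and $v=\varepsilon\psi\sin(kx)/k\cdot$ (a suitable phase) so that $\int q_2h_xv_x$ has a definite sign and grows with $k$ relative to the other terms. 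The aim is to show $\frac{dW}{dt}+2\gamma W>0$ for suitable data, contradicting the Lyapunov inequality $\frac{dW}{dt}\le -2\gamma W$, which must hold for all (at least smooth) initial data by density and well-posedness (Appendix~\ref{app:well_posedness_linear}). Hence $q_2\equiv0$ on $(0,L)$, and so on $[0,L]$ by continuity.

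The main obstacle is choosing the scaling so that the indefinite cross term strictly dominates. Since $h_x$ appears only linearly elsewhere, this amounts to a quadratic form in $(h_x,v_x)$ with matrix $\begin{pmatrix}0 & -4\mu q_2\\ -4\mu q_2 & -8\mu q_3\end{pmatrix}$, which has negative determinant when $q_2\neq0$ and so is indefinite. The step I would carry out carefully is to make this pointwise indefiniteness rigorous. I would first take $h=\psi\sin(kx)$ and $v=\alpha\psi\cos(kx)$ with $\alpha$ small and of appropriate sign, and then verify that the $h_x$-terms coming from $B$ average out to $O(1)$ as $k\to\infty$ by Riemann–Lebesgue-type integration by parts. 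With this choice, $\frac{dW}{dt}\approx \mu k^2(4|q_2|\alpha-8q_3\alpha^2)\int\psi^2/2\to+\infty$ for $0<\alpha<|q_2|/(2q_3)$, while $W$ stays bounded. This gives the contradiction.
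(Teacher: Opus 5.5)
Your strategy is the paper's: the only second-order contribution to $\frac{dW}{dt}$ is $-8\mu\int_0^L(q_2h_xv_x+q_3v_x^2)\,dx$ (in your notation). This form is indefinite in $(h_x,v_x)$ when $q_2\neq0$, and you contradict the Lyapunov inequality with high-frequency data supported where $q_2$ has a fixed sign. However, the concrete data you settle on do not realize this indefiniteness, so the decisive step fails as written.

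With $h=\psi\sin(kx)$ and $v=\alpha\psi\cos(kx)$, the leading parts of $h_x$ and $v_x$ are $k\psi\cos(kx)$ and $-\alpha k\psi\sin(kx)$. These are in quadrature, hence
\[
\int_0^L q_2h_xv_x\,dx=-\frac{\alpha k^2}{2}\int_0^L q_2\psi^2\sin(2kx)\,dx+O(k)=o(k^2),\qquad \int_0^L q_3v_x^2\,dx=\frac{\alpha^2k^2}{2}\int_0^L q_3\psi^2\,dx+O(k).
\]
All remaining terms of $\frac{dW}{dt}$ are $O(\|y\|_{L^2}\|y_x\|_{L^2}+\|y\|_{L^2}^2)=O(k)$. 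So for your data $\frac{dW}{dt}=-4\mu\alpha^2k^2\int_0^L q_3\psi^2\,dx+o(k^2)\to-\infty$. Your expansion $\mu k^2(4|q_2|\alpha-8q_3\alpha^2)\int\psi^2/2$ is therefore incorrect, and no contradiction arises.

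Your first choice $v=\varepsilon\varphi$ fails similarly. Since $v_x$ does not oscillate, $\int q_2h_xv_x=-\int h\,(q_2v_x)_x=O(\varepsilon)$, not of size $k\varepsilon$. The phase you left as ``suitable'' is exactly the point that has to be right.

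The repair is to take $v$ proportional to $h$: $h=\psi\sin(kx)$ and $v=\alpha h$. Here $\psi$ is supported in a compact interval $I$ on which $q_2\neq0$, $\alpha=-\sigma\,\mathrm{sign}(q_2)$, and $0<\sigma<\min_I|q_2|/\max_I q_3$. Then $v_x=\alpha h_x$, and
\[
-8\mu\int_0^L(q_2h_xv_x+q_3v_x^2)\,dx=8\mu\int_0^L\left(\sigma|q_2|-\sigma^2q_3\right)h_x^2\,dx\geq c\,k^2
\]
for some $c>0$ and all large $k$. Everything else in $\frac{dW}{dt}$ is $O(k)$ by Cauchy--Schwarz as above. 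Thus $\frac{dW}{dt}>0$ for such compactly supported data, which lie in the domain and make all boundary terms vanish. This contradicts $\frac{dW}{dt}\le-\gamma W$.

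With this correction your proof is complete and is essentially the paper's argument. Your scaling $v\sim\alpha h$ has one small advantage: the competing terms are of order $k^2$ versus $O(k)$, so crude bounds suffice. The paper's sketch instead keeps the $v_x^2$ integral and the other terms bounded (e.g.\ $v$ of amplitude $1/k$). There the cross term grows only like $k$, and an in-phase averaging argument is needed to see that the first-order terms are $o(k)$.
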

 
{We prove this result in Appendix~\ref{app:proof_Q_diag}.}
A similar result is known in Riemann coordinates: after a change of variable to make the term differentiated with respect to $x$ diagonal, the Lyapunov function must be diagonal \cite{bastin2011boundary}. Here the constraint we show is even stronger, the Lyapunov function needing to be diagonal {in Riemann coordinates as well as} in physical coordinates $(h \; v)^T$. In particular, this implies that the Lyapunov functions found in \cite{hayat2019quadratic} and \cite{hayat2021exponential} are not suitable anymore when a small viscosity term is added.

{ For instance, the Lyapunov function in \cite[Proposition 4]{hayat2019quadratic} has the form
{\small \[ W(h,v) = \int_0^L \frac{q_1 + q_2}{H^*} \left( gh^2 + 2 \frac{q_1 - q_2}{q_1 + q_2} \sqrt{gH^*}hv + H^*v^2 \right) dx \]}
and does not generalize to the viscous case (in the sense of Proposition~\ref{prop:Q_diag}) due to the presence of the cross-term $hv$.  
}

If the Lyapunov is diagonal in physical coordinates, as is the case in \cite{bastin2017quadratic} for instance, then it can still be used in the viscous case. In this case, we can deduce stability conditions that converge back to those in \cite{bastin2017quadratic} when the viscosity $\mu$ tends to 0. 

\begin{theorem}
\label{th:main}
{Let $(H^{*},V^{*})$ be a steady state of \eqref{eq:sv_nonlinear_1}, \eqref{eq:sv_nonlinear_2}, as constructed in Proposition~\ref{prop:estimVx} for some $(H_0,V_0)$ satisfying the conditions therein. Suppose further that $(H^{*},V^{*})$ satisfies the} boundary conditions \eqref{eq:nonlinear_bc}, the subcritical condition \eqref{eq:subcritical} and 
\begin{equation}
\label{eq:assumpsteady}
gH^{*}(0)<(2+\sqrt{2})V^{*2}(0).
\end{equation}
Assume the boundary conditions coefficients in \eqref{eq:linear_bc} satisfy 
\begin{equation}
    b_0 \in (b_0^-, b_0^+), \quad b_1 \in \mathbb{R} \setminus [b_1^-, b_1^+], \quad c_1 \in (c_1^-, c_1^+),
    \label{eq:bc_coef_constraints}
\end{equation}
for $b_0^\pm$, $b_1^\pm$ and $c_1^\pm$ {defined as:}
{
\begin{align}
    b_0^\pm &= g \left( \frac{1}{V^*(0)} \pm \sqrt{ \frac{1}{V^*(0)^2} - \frac{1}{g H^*(0)} } \right), \label{eq:bc_b0} \\
    b_1^\pm &= g \left( - \frac{1}{V^*(L)} \pm \sqrt{ \frac{1}{V^*(L)^2} - \frac{1}{g H^*(L)} } \right) \label{eq:bc_b1}, \\
    c_1^\pm &= \frac{4 \left( - V^* - b_1 H^* \pm \sqrt{V^* (H^*V^*b_1^2/g + 2 H^*b_1 + V^*)} \right)}{H^* g - V^{*2}}. \label{eq:bc_c1}
\end{align}
}
Then there exists ${\mu_{\circ}}>0$ such that for any $\mu\in(0,{\mu_{\circ}})$ {the linearized system \eqref{eq:linear_system} with boundary conditions \eqref{eq:linear_bc} 
is exponentially stable for the ${L^{2}}$-norm}.
\end{theorem}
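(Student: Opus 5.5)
Following Proposition~\ref{prop:Q_diag}, I will look for a diagonal quadratic Lyapunov function
\[
W(t)=\int_0^L \big(q_1(x)h^2+q_2(x)v^2\big)\,dx,\qquad q_1,q_2>0,
\]
modelled on the diagonal function of \cite{bastin2017quadratic}. Differentiating along solutions of \eqref{eq:linear_system} and integrating by parts splits $\dot W$ into three pieces. The first is an interior quadratic form in $(h,v)$. The second is the viscous contribution $8\mu\int_0^L q_2 v v_{xx}\,dx$, which after one integration by parts becomes
\[
-8\mu\int_0^L q_2 v_x^2\,dx-8\mu\int_0^L q_{2,x}vv_x\,dx+8\mu\big[q_2vv_x\big]_0^L .
\]
The third is the hyperbolic boundary term $-\big[q_1V^*h^2+2q_1H^*hv\,(\text{resp. } gq_2hv)+q_2V^*v^2\big]_0^L$.

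The hyperbolic cross terms must combine into a single total derivative. So I impose $q_1H^*=q_2\,(g-4\mu V^*_x/H^*)$ at leading order; concretely I take $q_2=q_1H^*/g$ up to $O(\mu)$ corrections, as in \cite{bastin2017quadratic}. I then choose the profile $q_1$, for example $q_1=e^{-\lambda x}$-type weights adapted to $H^*,V^*$, so that the interior matrix is negative definite when $\mu=0$, using $V^*_x\ge0$ and the friction terms. The viscous $-8\mu q_2v_x^2$ is dissipative and absorbs the term $8\mu q_{2,x}vv_x$ via Young's inequality, at the cost of an $O(\mu)$ addition to the $v^2$ coefficient.

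The delicate point is that $B$ and $C$ contain $\mu V^*_x$, $\mu V^*_{xx}$ and $f^2/(\mu HV)$. By Proposition~\ref{prop:estimVx}, $\mu V^*_x=O(\mu)$ uniformly. However, $V^*_{xx}$ is $O(1)$ and $q$ is differentiated, so I must check that derivatives landing on coefficients never produce $\mu^{-1}$. The boundary-layer factor $e^{-\frac1{4\mu}\int\Gamma}$ stays bounded, so the interior form is a uniformly $O(\mu)$ perturbation of the inviscid one. Also $f/\mu$ is bounded, so $f=O(\mu)$ and the friction terms are harmless.

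For the boundary terms I substitute \eqref{eq:linear_bc}. At $x=0$, $v_x=0$ kills the viscous term and $v=-b_0h$ yields a quadratic form in $h(0)$. Its sign condition $V^*-2H^*b_0\cdot(\dots)+\dots$ should reduce exactly to $b_0\in(b_0^-,b_0^+)$, with \eqref{eq:assumpsteady} guaranteeing this interval is nonempty and compatible with the weights. At $x=L$, the unknowns are $h(L)$ and $v_x(L)$, with $v=b_1h+\mu c_1v_x$. The boundary contribution becomes a $2\times2$ quadratic form in $(h(L),\mu v_x(L))$, combining $8\mu q_2vv_x$ with the hyperbolic terms. I will require it to be nonpositive, and I expect the $(1,1)$ entry to give $b_1\notin[b_1^-,b_1^+]$ and the determinant condition to give $c_1\in(c_1^-,c_1^+)$ as in \eqref{eq:bc_c1}. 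Because the factor $\mu$ multiplies $v_x$ uniformly, the scaling is homogeneous and only $O(\mu)$ perturbations of the coefficients appear; strict inequalities then persist for $\mu<\mu_\circ$.

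Finally, $\dot W\le-\gamma W$ together with the equivalence of $W$ to the $L^2$ norm gives exponential decay. I will argue this for smooth solutions and extend by density, using the well-posedness of Appendix~\ref{app:well_posedness_linear}.

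The main obstacle is the boundary algebra at $x=L$. The difficulty is checking that the chosen $q$ makes the $2\times2$ boundary form match the explicit constants $b_1^\pm,c_1^\pm$, while keeping the interior form negative uniformly in $\mu$ despite the boundary layer in $V^*_{xx}$. If exact matching fails, I will introduce a free weight ratio at $x=L$ and optimize over it.
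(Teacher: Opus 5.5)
Your boundary analysis is essentially the paper's. At $x=0$, the conditions $v_x=0$, $v=-b_0h$ leave a scalar form in $h(0)$ that is negative at $\mu=0$ exactly when $b_0\in(b_0^-,b_0^+)$. At $x=L$, one gets a form in $(h(L),\mu v_x(L))$ whose $(1,1)$ entry and discriminant give the conditions on $b_1$ and $c_1$. All of these are strict at $\mu=0$ and so survive for small $\mu$.

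\textbf{The gap is in the interior term.} You plan to choose $q_1$ so that the interior matrix is negative definite at $\mu=0$ ``using $V^*_x\ge0$ and the friction terms'', and then treat the viscous contributions as a uniform $O(\mu)$ perturbation. But, as you note yourself, $f=O(\mu)$, and also $V^*_x=O(\mu)$. As $\mu\to0$ the steady state tends to the constant $(H_0,V_0)$, and $A,C\to0$ uniformly. With $Q$ diagonal and $QB$ symmetric (so $q_2=q_1H^*/g$ in the limit), the limiting interior matrix is
\[
(QB)_x=q_{1,x}\begin{pmatrix} V_0 & H_0\\ H_0 & H_0V_0/g\end{pmatrix},\qquad \det = q_{1,x}^2\,\frac{H_0(V_0^2-gH_0)}{g}<0 .
\]
This is indefinite whenever $q_{1,x}\neq0$ and zero when $q_1$ is constant. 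An $e^{-\lambda x}$-type weight therefore creates an $O(1)$ indefinite interior form that no $O(\mu)$ term can repair. A constant weight gives no dissipation at all at $\mu=0$; this is the point of the remark following Proposition~\ref{prop:inner}.

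Hence the weight must be constant up to an $O(\mu)$ correction with $O(\mu)$ gradient. The whole interior matrix is then $O(\mu)$, with determinant $O(\mu^2)$. Its sign is decided by exactly the terms you called harmless: $f/\mu$, $V^*_x/\mu$, and the boundary-layer term $\mu\tilde q_{1,x}\propto\mu V^*_{xx}$. At $x=0$, where $V^*_x(0)=0$, the only negative contribution to the $(1,1)$ entry is $\mu\tilde q_{1,x}V^*$, which comes from $V^*_{xx}(0)>0$.

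What is missing is the paper's second-order computation. Take $Q=\mathrm{diag}(g+\mu\tilde q_1,\,H^*+\mu H^*)$ with $\tilde q_1=g-4(1+\mu)V^*_x/H^*$, so that $QB$ is \emph{exactly} symmetric. Your ``up to $O(\mu)$ corrections'' is not enough: an antisymmetric remainder $k=O(\mu)$ produces $\int k\,hv_x$, which the $\mu v_x^2$ dissipation absorbs only at a cost $k^2/\mu=O(\mu)$ in the $h^2$ coefficient, the same order as all the available dissipation. With this $Q$, one has $D_{22}=-2q_2\big(V^*_x+f/(H^*V^*)\big)<0$. The paper then shows $\mu^{-2}\det D\ge c>0$ uniformly in $x$, using the full asymptotics of Proposition~\ref{prop:estimVx} and a sign argument for the boundary-layer contribution.

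For $x>0$, the limit of $\mu^{-2}\det D$ has the sign of $-2(gH_0)^2+8gH_0V_0^2-4V_0^4$. This is positive iff $(2-\sqrt2)V_0^2<gH_0<(2+\sqrt2)V_0^2$, and that is where \eqref{eq:assumpsteady} is used. You instead assigned \eqref{eq:assumpsteady} to the boundary at $x=0$, but $(b_0^-,b_0^+)$ is nonempty by subcriticality alone. That your plan finds no interior role for this hypothesis is a symptom of the gap.
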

{
\begin{rmk}[Subcritical condition]
Note that we only assume the steady state $(H^*, V^*)$ of the nonlinear system is subcritical; no such assumption is made for the solution $(h, v)$ of the linearized system. For the linear system, the propagation speeds only depend on $(H^*, V^*)$, so the behavior of the regime only depends on $(H^*, V^*)$. To extend the result to local exponential stability of the nonlinear system, one should then assume $h$ and $v$ are sufficiently small so that the full solution $(h+H^{*},v+V^{*})$ remains in the subcritical regime.
\end{rmk}
}
\begin{rmk}[{Limitations}] \label{rmk:limitations}
Compared to the results of {\cite{bastin2017quadratic}}
and \cite{hayat2019quadratic}, not all steady-states can be stabilized but only those satisfying 
\eqref{eq:assumpsteady}. Concerning \cite{hayat2019quadratic} this is due to the fact that, as surprising as it may seem, the optimal Lyapunov function for the inviscid system used in \cite{hayat2019quadratic} and \cite{hayat2021exponential} cannot be used anymore when viscosity appears. Concerning \cite{bastin2017quadratic}, the difference comes from the form of the source term $f(H^{*},V^{*})/H^{*} \sim  V^{*}/{H^{*}}^{2}$ while in the Saint-Venant equations considered in \cite{bastin2017quadratic} the source term is of the form ${V^{*}}^{2}/H^{*}$. With a source term of the form ${V^{*}/{H^{*}}^{2}}$, the same 
limitation would occur using the results of \cite{bastin2017quadratic}.
\end{rmk}

\section{Exponential stability of the linearized system}
\label{sec:exp_stab}

In this Section, we show Theorem~\ref{th:main}. To do so, let us consider a steady state $(H^*,V^*)$ of \eqref{eq:sv_nonlinear_1}, \eqref{eq:sv_nonlinear_2} that satisfies \eqref{eq:nonlinear_bc}, \eqref{eq:subcritical} and \eqref{eq:assumpsteady}. We introduce the following quadratic Lyapunov function
\begin{equation}
    {\bf {W}}(y) = \int_0^L y^T{(t,x)}Q(x)y{(t,x)} \; dx,
    \label{eq:lyap_fn}
\end{equation}
where 
\begin{equation}
\label{eq:defQ}
    y := \begin{pmatrix} h \\ v \end{pmatrix},\;\; Q =\text{diag}(q_{1},q_{2}):= \begin{pmatrix} g + \mu \widetilde{q_1} & 0 \\ 0 &H^{*} + \mu \widetilde{q_2}\end{pmatrix}.
\end{equation}
{Here} $\widetilde{q_1}$ and $\widetilde{q_2}$ are chosen as
\begin{equation}
\label{eq:defq1q20}
\widetilde{q}_{2} = H^{*},\; \widetilde{q_{1}} = g -4 (1+\mu)\frac{V_{x}^{*}}{H^{*}},
\end{equation}
such that 
\begin{equation}
\label{eq:defq1q2}
H^{*} (g+\mu\widetilde{q_1}) = (H^{*}+\mu\tilde{q}_{2})\left(g - 4 \mu \frac{V^{*}_x}{H^{*}}\right),
\end{equation}
which is equivalent to saying that $QB$ is symmetric.
Note that $QA$ is also symmetric since $Q$ and $A$ are both diagonal and that when $\mu \rightarrow 0$ we recover the Lyapunov function introduced {in \cite[Equation (17)]{bastin2017quadratic}, which is defined as $q_1 = g$ and $q_2 = H^*$.}

{
In order to show Theorem \ref{th:main}, it is enough to show that the Lyapunov function satisfies the exponential stability properties below.

\begin{proposition}
    The Lyapunov function \eqref{eq:lyap_fn} satisfies the following properties:
    \begin{enumerate}
        \item ${\bf {W}}(y)$ 
        is equivalent to the square of the $L^{2}$ norm meaning that there exists positive constants $c_{1}$, $c_{2}$ such that for any $y\in L^{2}((0,L);\mathbb{R}^{2})$
        \[ c_1 \| y \|_{L^2}^{{2}} \leq {\bf {W}}(y) \leq c_2 \| y \|_{L^2}^{{2}}.\] Note that this implies positive definiteness (${\bf {W}}(y) > 0, \; \forall y \neq (0, 0)^T$, and ${\bf {W}}((0, 0)^T) = 0$) as well as the radially unbounded property (${\bf {W}}(y) \to +\infty$ as $\| y \|_{L^2} \to +\infty$).
        \item If \eqref{eq:bc_coef_constraints} is satisfied, then there exists $\gamma>0$ such that for any 
        $y\in C^{1}([0,{+\infty)};L^{2}(0,L))$ solution to \eqref{eq:linear_system} with boundary conditions \eqref{eq:linear_bc}, \[\frac{d}{dt}({\bf {W}}(y(t,\cdot))) \leq - \gamma {\bf {W}}(y(t,\cdot)).\]
    \end{enumerate}
    \label{prop:lyap_3_props}
\end{proposition}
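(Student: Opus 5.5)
For the first property I will bound $q_1$ and $q_2$ uniformly. By Proposition~\ref{prop:estimVx}, $H^*$ is continuous and positive on $[0,L]$, $V^*=V_0+O(\mu)$, and $V^*_x$ is bounded uniformly in $\mu$. Hence $q_2=(1+\mu)H^*$ and $q_1=g+\mu g-4\mu(1+\mu)V^*_x/H^*$ lie between two positive constants once $\mu$ is small. Then $c_1=\min(\min q_1,\min q_2)$ and $c_2=\max(\max q_1,\max q_2)$ work. This also lets me replace the decay estimate for $\mathbf W$ by one for $\|y\|_{L^2}^2$.

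For the second property I will first argue for regular solutions and conclude by density using Appendix~\ref{app:well_posedness_linear}. Differentiating and using \eqref{eq:linear_system},
\[
\frac{d}{dt}\mathbf W=-2\int_0^L y^TQ\left(Ay_{xx}+By_x+Cy\right)dx .
\]
Since $QB$ is symmetric by \eqref{eq:defq1q2}, I integrate by parts to get $-2\int y^TQBy_x=-[y^TQBy]_0^L+\int y^T(QB)_xy$. For the viscous part, $-2\int y^TQAy_{xx}=8\mu\int q_2vv_{xx}$. Integrating by parts gives $8\mu[q_2vv_x]_0^L-8\mu\int q_2v_x^2-8\mu\int q_{2,x}vv_x$. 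The term $-8\mu\int q_2 v_x^2$ is a good dissipative term. I absorb the cross term by Young's inequality, $8\mu|q_{2,x}vv_x|\le 4\mu q_2v_x^2+C\mu v^2$, where $q_{2,x}=(1+\mu)H^*_x$ is bounded. This leaves an interior quadratic form $-\int y^T\big(QC+C^TQ-(QB)_x\big)y\,dx+O(\mu)\int v^2$ and boundary terms. The boundary term at $0$ has no viscous part because $v_x(t,0)=0$.

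At $x=L$ I eliminate $v_x$ using $\mu c_1v_x=v-b_1h$, so $8\mu q_2 vv_x=8q_2v(v-b_1h)/c_1$. Together with $-y^TQBy$ this gives a quadratic form in $(h(L),v(L))$ with $v=b_1h+\mu c_1v_x$. I would keep $v_x(L)$ as a free variable, giving a quadratic form in $(h(L),v_x(L))$. Requiring it to be negative (semi)definite should reproduce $b_1\notin[b_1^-,b_1^+]$ (the $\mu\to0$ inviscid condition of \cite{bastin2017quadratic}) and $c_1\in(c_1^-,c_1^+)$ through a discriminant condition, in the limit $\mu\to0$. At $x=0$, $v=-b_0h$ gives $h(0)^2\big(g V^*-2b_0gH^*\cdot(\ldots)+H^*V^*b_0^2\big)$ up to $O(\mu)$. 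The sign condition defining $b_0^\pm$ is exactly positivity of this expression. By strictness, these conditions survive for small $\mu$.

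The main obstacle is the interior term: I must show that $N:=QC+C^TQ-(QB)_x$ is uniformly positive definite for small $\mu$. This is subtle because $V^*_{xx}$ and $H^*_x$ do not vanish as $\mu\to0$ near $x=0$, where there is a boundary layer of width $\mu$ (Proposition~\ref{prop:estimVx}). I will compute $N$ using the estimates \eqref{eq:estimVmu}, noting $H^*_x=-(H^*V^*_x)/V^*$ from \eqref{eq:sv_nonlinear_1}. I expect the layer contributions $\mu V^*_{xx}$ to be $O(1)$ but of favorable sign, since $C_0>0$ and $gH^*>V^{*2}$, and the $f$-terms to be $O(\kappa)$-dominant. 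Positivity of $N$ then reduces to a determinant condition in $V^*,H^*$, namely $N_{11}N_{22}>N_{12}^2$. I expect this to give precisely $gH^*(0)<(2+\sqrt2)V^{*2}(0)$ via a quadratic in $gH^*/V^{*2}$ with root $2+\sqrt2$. If the $f^2/(3\mu H^*V^*)$ term is troublesome, I would exploit that $f/\mu$ is bounded and check its sign. Finally, combining the interior bound $\int y^TNy\ge c\|y\|^2$, nonpositive boundary terms, and the absorbed $O(\mu)$ error yields $\frac{d}{dt}\mathbf W\le -c\|y\|^2\le-\gamma\mathbf W$ with $\gamma=c/c_2$, for $\mu<\mu_\circ$.
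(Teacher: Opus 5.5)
Your overall architecture matches the paper: the same $Q$, Property~1 from uniform bounds on $q_1,q_2$, an interior quadratic form plus boundary forms in $h(0)$ and $(h(L),v_x(L))$, and strict conditions at $\mu=0$ propagated by continuity. The boundary part is fine up to a sign slip. At $x=0$ the contribution is $+y^TQBy(0)$, so you need $gV^*-2b_0gH^*+b_0^2H^*V^*<0$, which is what $b_0\in(b_0^-,b_0^+)$ gives.

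The gap is in the interior term. First, your scaling is off. The quantities $V^*_x$, $H^*_x=-H^*V^*_x/V^*$, $f(H^*,V^*)\approx 3\mu V^*/H^*$ and $\mu V^*_{xx}$ are all $O(\mu)$; the layer terms are not $O(1)$, and $f$ is not $O(\kappa)$. So every entry of $N$ is $O(\mu)$, and $N$ cannot be uniformly positive definite. The most one can hope for is $N\geq c\mu$ with $\det N\geq c\mu^2$, and an $O(\mu)\int v^2$ remainder could not be absorbed. Your Young remainder is in fact $4\mu q_{2x}^2q_2^{-1}v^2=O(\mu^3)$, but only because $q_{2x}=O(\mu)$, which you need to say.

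Second, and decisively, estimating $-8\mu\int_0^L q_{2x}vv_x\,dx$ by Young throws away an exact cancellation. The paper integrates it by parts once more:
$-8\mu\int_0^L q_{2x}vv_x\,dx=-4\mu[q_{2x}v^2]_0^L+4\mu\int_0^L q_{2xx}v^2\,dx$,
which is the $-Q_{xx}A$ term in $\mathcal I$. Since $(H^*V^*)_x=0$, the $(2,2)$ entry of $(QB)_x$ is $-4\mu(1+\mu)H^*_{xx}=-4\mu q_{2xx}$. The two cancel, leaving $2q_2\bigl(V^*_x+f/(H^*V^*)\bigr)$ as the $(2,2)$ entry.

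In your $N$ the term $4\mu(1+\mu)H^*_{xx}\approx-4\mu H^*V^*_{xx}/V^*$ survives. It has the unfavourable sign and is of leading order inside the boundary layer. At $x=0$, use $V^*_x(0)=0$ and, from the steady-state equation, $4\mu V^*_{xx}(0)=f_0/H_0$ with $f_0:=f(H_0,V_0)$. One then gets exactly $N_{11}(0)=(1+\mu)f_0V_0/H_0^2$, $N_{22}(0)=(1+\mu)f_0/V_0$ (half the paper's value) and $N_{12}(0)=-(1+\mu)f_0^2/(3\mu V_0)$. Hence $\det N(0)=(1+\mu)^2f_0^2\bigl(H_0^{-2}-\kappa^2(3\mu+\kappa H_0)^{-2}\bigr)=O(\mu^3)$: the order-$\mu^2$ part cancels exactly.

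Your leading-order determinant argument therefore degenerates near $x=0$. There the sign is decided by $O(\mu^3)$ terms that the estimates of Proposition~\ref{prop:estimVx} do not control, since they give only $O(\mu)$ accuracy on $V^*_{xx}$. Your expectation that all layer contributions have a favourable sign is false for this one.

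The missing step is to integrate the cross term by parts rather than estimate it. Then your $N$ becomes the paper's $-D$, whose determinant stays $\geq c\mu^2$ across the layer. The condition $gH^*(0)<(2+\sqrt2)V^{*2}(0)$ is then exactly what is needed away from the layer.
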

{Indeed, assume that Proposition \ref{prop:lyap_3_props} holds. Then from Appendix \ref{app:well_posedness_linear}, for any initial condition $(h_{0},v_{0})^{T}$ the system \eqref{eq:linear_system}, \eqref{eq:linear_bc} has a unique solution $y=(h,v)\in C^{0}([0,+\infty);L^{2}(0,L))$ and from Proposition \ref{prop:lyap_3_props},
\begin{equation}
    c_{1}\|(h,v)(t,\cdot)\|_{L^{2}}^{2}\leq {\bf {W}}(y(t,\cdot)) \leq e^{-\gamma t}{\bf {W}}(y(0,\cdot))\leq c_{2} \|(h_{0},v_{0})\|_{L^{2}}^{2},
\end{equation}
and since $\gamma$, $c_{1}$ and $c_{2}$ do not depend on $y$ this ends the proof of Theorem \ref{th:main}.
}

{Let us prove Proposition \ref{prop:lyap_3_props}.} Property 1) directly follows from the fact that $Q$ is positive definite for $\mu \in (0, \mu_Q)$, for some $\mu_Q > 0$ that can be computed explicitly. The rest of this section proves property 2).
}

We now compute the time derivative of ${\bf {W}}(y(t,\cdot))$ along the solutions of the system. Using \eqref{eq:linear_system} and the symmetry of $Q$, we get:
\begin{align}
    \dot{\bf {W}} &= \int_0^L 2 y^T Q y_t \; dx \\
    &= -2 \int_0^L y^T Q (Ay_{xx} + By_x + Cy) dx
\end{align}
Simplifying further, we obtain
\begin{equation}
\label{eq:afterint}
\dot{\bf {W}} + \gamma {\bf {W}} = \mathcal I + \mathcal B
\end{equation}
where the integral and boundary terms are given as follows:
\begin{align}
\begin{split}
    \mathcal I &= \int_0^L y^T\left(\gamma Q - (QC + (QC)^{T}) - Q_{xx}A + (QB)_x\right)y \; dx  + 2\int_0^L y^T_x QA y_x \; dx,
    \label{eq:lyap_I}
\end{split} \\
\begin{split}
    \mathcal B &= \left[ y^T(Q_xA - QB)y -2 y^TQAy_x \right]_0^L.
    \label{eq:lyap_B}
\end{split}
\end{align}
The full derivation can be found in Appendix~\ref{app:lyapunov_derivation}. 
{We have the two following claims about $\mathcal{I}$ and $\mathcal{B}$ respectively:} 

\begin{proposition}
\label{prop:inner}
{Let $\phi(\mu{, \gamma}) =  \gamma Q - (QC+(QC)^{T}) - Q_{xx}A + (QB)_x$, defined for $\mu > 0$.} There exists $\mu_{{a}}>0$ such that for any viscosity $\mu\in(0,\mu_{{a}})$, there exists $\gamma>0$ such that $\phi(\mu{, \gamma})$ is negative definite for any $x\in[0,L]$.     
\end{proposition}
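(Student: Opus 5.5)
The plan is a first-order expansion in $\mu$. By Proposition~\ref{prop:estimVx}, all coefficients of the linearized system are $O(\mu)$ perturbations of a constant-coefficient transport system, so $\phi(\mu,0)$ vanishes at order $0$. Its sign is therefore decided at order $\mu$. I will write
\[
\phi(\mu,\gamma)=\gamma Q+\mu M_\mu(x)+O(\mu^2),
\]
with $M_\mu$ bounded uniformly in $\mu$ and $x$. I will then show that $M_\mu(x)\le -2\alpha I$ for some $\alpha>0$ independent of $\mu$ and $x$. Choosing $\gamma=\varepsilon\mu$ with $\varepsilon\|Q\|_\infty<\alpha$ and taking $\mu$ small enough that the $O(\mu^2)$ remainder is below $\alpha\mu/2$ then gives negative definiteness for all $x\in[0,L]$.

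\textbf{Orders of the coefficients.} Since $\kappa H/(3\mu)\to\infty$, we have $f(H^*,V^*)=3\mu V^*/H^*+O(\mu^2)$, so $f/\mu$ is bounded. Proposition~\ref{prop:estimVx} then gives:
\begin{itemize}
\item $V^*_x=O(\mu)$;
\item $H^*_x=-H^*V^*_x/V^*=O(\mu)$, using $H^*V^*=\mathrm{const}$ from \eqref{eq:sv_nonlinear_1};
\item $\mu V^*_{xx}=O(\mu)$, although $V^*_{xx}$ itself is only $O(1)$ because of the boundary layer.
\end{itemize}

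\textbf{Entries of $\phi$.} I compute each entry using \eqref{eq:defQ}--\eqref{eq:defq1q20} and the symmetry of $QB$.
\begin{itemize}
\item $Q_{xx}A=\mathrm{diag}(0,-4\mu q_{2,xx})$, which is $O(\mu^2)$ plus a term $\mu\cdot O(\mu V^*_{xx})$. It is therefore negligible, except that I must keep track of $\mu V^*_{xx}$ wherever it is multiplied only by $\mu$.
\item In $(QB)_x$, the diagonal entries are $(q_1V^*)_x=gV^*_x+O(\mu^2)$ and $(q_2(V^*-4\mu H^*_x/H^*))_x=(H^*V^*)_x+O(\mu^2)=O(\mu^2)$. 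The off-diagonal entry is $(q_1H^*)_x=gH^*_x+\mu(\widetilde q_1H^*)_x$. Its term $\mu(\widetilde q_1 H^*)_x$ contains $-4\mu V^*_{xx}$, which is of order $\mu$ inside the layer.
\item In $QC+(QC)^T$, the diagonal entries are $2gV^*_x$ and $2H^*(V^*_x+f/(H^*V^*))$. The off-diagonal entry is $gH^*_x+H^*\bigl(-f/{H^*}^2-f^2/(3\mu H^*V^*)\bigr)+O(\mu^2)$.
\end{itemize}

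\textbf{Form of $M_\mu$.} Collecting terms, $M_\mu$ is a symmetric $2\times2$ matrix. Its diagonal is
\[
\bigl(-gV^*_x/\mu,\; -2(H^*V^*_x+f/V^*)/\mu\bigr),
\]
and its off-diagonal entry is explicit in $f/\mu$, $V^*_x/\mu$ and $V^*_{xx}$. I substitute the expansion of Proposition~\ref{prop:estimVx}:
\[
V^*_x/\mu=a(x)-C_0 s,\qquad \mu V^*_{xx}/\mu=V^*_{xx}=\tfrac{C_0}{4V^*}(gH^*-{V^*}^2)\,s+O(\mu),
\]
where $s=e^{-\frac1{4\mu}\int_0^x\Gamma}\in(0,1]$ and $a=V^*f/(\mu H^*(gH^*-{V^*}^2))$. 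With this substitution $M_\mu(x)=\widehat M(x,s)+O(\mu)$, where $\widehat M$ is continuous in $(x,s)\in[0,L]\times[0,1]$ and depends on $\mu$ only through quantities that converge as $\mu\to0$.

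\textbf{Negativity of $\widehat M$.} It suffices to show $\widehat M(x,s)<0$ on the compact set $[0,L]\times[0,1]$. I will do this by checking that the trace is negative and the determinant is positive.
\begin{itemize}
\item The trace is negative because $V^*_x\ge0$ (Proposition~\ref{prop:estimVx}) and $f>0$.
\item The determinant condition, after dividing by common positive factors and using $f/\mu\approx 3V^*/H^*$, reduces to a polynomial inequality in the ratio $r=gH^*/{V^*}^2>1$ and in $s$.
\item Outside the layer ($s\approx0$), this inequality is the analogue of the condition in \cite{bastin2017quadratic} for the source term $V^*/{H^*}^2$. I expect it to be exactly $r<2+\sqrt2$, which is \eqref{eq:assumpsteady}.
\item Since $V^*=V_0+O(\mu)$ and $H^*V^*$ is constant, $r(x)=r(0)+O(\mu)$, so \eqref{eq:assumpsteady} at $x=0$ propagates to all $x$ for $\mu$ small.
\end{itemize}

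\textbf{Main obstacle: the boundary layer.} The difficulty is the range $s\in(0,1]$, where $V^*_{xx}$ is $O(1)$ and the off-diagonal entry of $M_\mu$ picks up an additional $O(1)$ contribution. I will first check whether the $V^*_{xx}$ contributions cancel between $(QB)_x$ and $-(QC+(QC)^T)$, since $\widetilde q_1$ was designed precisely so that $QB$ is symmetric. If they do not cancel, I will treat the determinant as a quadratic polynomial in $s$. I will then check its positivity at $s=0$ (the condition above) and at $s=1$, which corresponds to $x=0$ where $V^*_x(0)=0$. Finally I will use concavity or monotonicity in $s$ to cover the whole interval. If that fails, I would modify $\widetilde q_1,\widetilde q_2$ by an $O(\mu)$ layer-adapted correction while preserving \eqref{eq:defq1q2}.
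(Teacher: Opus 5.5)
Your route is the paper's: expand $D=-(QC+(QC)^T)-Q_{xx}A+(QB)_x$ to leading order using Proposition~\ref{prop:estimVx}, then conclude by Sylvester from the $(2,2)$ entry and the determinant, with \eqref{eq:assumpsteady} emerging away from the layer. However, one of your entries is wrong, exactly where you locate the main difficulty.

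You assert $(q_1V^*)_x=gV^*_x+O(\mu^2)$. In fact $q_{1x}=\mu\widetilde q_{1x}$, and $\widetilde q_{1x}=-4(1+\mu)(V^*_x/H^*)_x$ contains $-4V^*_{xx}/H^*$, which is $O(1)$ in the boundary layer. This is the estimate of $\widetilde q_{1x}$ in \eqref{eq:estimq1q2}. The correct entry is
\[
D_{11}=\mu\widetilde q_{1x}V^*-q_1V^*_x=-gV^*_x-\frac{4\mu V^*V^*_{xx}}{H^*}+O(\mu^2).
\]
Dropping the second term is fatal at $x=0$. There $V^*_x(0)=0$, so your $(1,1)$ entry of $M_\mu$ vanishes. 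The off-diagonal entry is $-4V^*_{xx}(0)+2f(H_0,V_0)/(\mu H_0)=f(H_0,V_0)/(\mu H_0)+O(\mu)\neq0$, because the steady-state equation at $x=0$ gives $4\mu V^*_{xx}(0)=f(H_0,V_0)/H_0$. So your $\widehat M$ is indefinite at $s=1$, your endpoint check fails, and you would be pushed to the unnecessary fallback of modifying $\widetilde q_1,\widetilde q_2$.

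Your $(2,2)$ entry is right, but not because each piece is $O(\mu^2)$. Since $H^*_{xx}=-H^*V^*_{xx}/V^*+O(\mu^2)$ is $O(1)$ in the layer, both $4\mu q_{2,xx}$ (from $-Q_{xx}A$) and $-4\mu(1+\mu)H^*_{xx}$ (from $(QB)_x$) are of order $\mu$ there; they cancel exactly.

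With the missing term restored, the layer is harmless. Put $F:=f(H^*,V^*)/\mu$, $r:=gH^*/{V^*}^2$, $s:=e^{-\frac{1}{4\mu}\int_0^x\Gamma(\tau)d\tau}\in(0,1]$ and $\rho:=(r-s)/(r-1)>0$. Also put $a:=V^*F/(H^*(gH^*-{V^*}^2))$, so that $C_0=a(0)=a+O(\mu)$. Proposition~\ref{prop:estimVx} gives $V^*_x/\mu=a(1-s)+O(\mu)$ and $4V^*_{xx}=sF/H^*+O(\mu)$. Hence $D=\mu F\,\widehat M+O(\mu^2)$ uniformly on $[0,L]$, with
\[
\widehat M=\begin{pmatrix}-\dfrac{V^*}{{H^*}^2}\rho & \dfrac{2-s}{H^*}\\ \dfrac{2-s}{H^*} & -\dfrac{2}{V^*}\rho\end{pmatrix},\qquad \det\widehat M=\frac{2\rho^2-(2-s)^2}{{H^*}^2}.
\]
Thus $\det\widehat M>0$ if and only if $\sqrt2\,(r-s)>(2-s)(r-1)$, which is affine in $s$. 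At $s=0$ it is exactly $r<2+\sqrt2$, and at $s=1$ it reads $(\sqrt2-1)(r-1)>0$. So $\widehat M$ is uniformly negative definite for $s\in[0,1]$, and your choice $\gamma=\varepsilon\mu$ then closes the argument.

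This agrees with the paper's expansion $\det D=D_0+O(\mu^3)$; the paper's $S$ is your $\mu a s$. Treating $s$ as an independent variable on a compact set handles the layer more cleanly than the paper's pointwise-in-$x$ limits.
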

{This is shown in Section \ref{ssec:int}.}
\begin{proposition}
\label{prop:boundary}
Assume that $b_0, b_1$ and $c_1$ satisfy \eqref{eq:bc_coef_constraints}. There exists ${\mu_{b}}>0$ such that for any viscosity $\mu\in(0,{\mu_{b}})$, 
$$\mathcal{B} \leq 0.$$
\end{proposition}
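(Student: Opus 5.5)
The plan is to compute $\mathcal B$ explicitly as a sum of two quadratic forms in the boundary values $(h,v)$, one at $x=0$ and one at $x=L$. In each, the boundary conditions \eqref{eq:linear_bc} let us eliminate $v_x$. We then check that each form is negative (semi)definite at $\mu=0$, with a strict margin under \eqref{eq:bc_coef_constraints}, and conclude by continuity in $\mu$ using the estimates of Proposition~\ref{prop:estimVx}.

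First I would expand \eqref{eq:lyap_B} with $Q=\mathrm{diag}(q_1,q_2)$ and $A=\mathrm{diag}(0,-4\mu)$. Since $QB$ is symmetric by \eqref{eq:defq1q2}, the bracket is
\begin{equation*}
y^T(Q_xA-QB)y-2y^TQAy_x=-q_1V^*h^2-2q_1H^*hv-\Big(q_2V^*-4\mu q_2\tfrac{H^*_x}{H^*}+4\mu q_{2,x}\Big)v^2+8\mu q_2 v v_x .
\end{equation*}
By Proposition~\ref{prop:estimVx}, $V^*$, $H^*$ and $V^*_x$ are bounded uniformly as $\mu\to0$, and $H^*_x=-H^*V^*_x/V^*$ from the steady-state mass equation. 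Hence $q_1=g+O(\mu)$, $q_2=H^*+O(\mu)$, and $\mu q_{2,x}=O(\mu)$ uniformly on $[0,L]$.

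\emph{Boundary $x=0$.} Here $v_x=0$ and $v=-b_0h$, so the contribution $-[\cdot]_{x=0}$ equals $h(t,0)^2\big(q_1V^*-2q_1H^*b_0+q_2V^*b_0^2+O(\mu)\big)$, evaluated at $x=0$. At $\mu=0$ the coefficient is $V^*(0)H^*(0)b_0^2-2gH^*(0)b_0+gV^*(0)$. This is a convex quadratic in $b_0$ whose roots are exactly $b_0^\pm$ in \eqref{eq:bc_b0}; they are real by \eqref{eq:subcritical}. So $b_0\in(b_0^-,b_0^+)$ gives a strictly negative coefficient, which stays negative for $\mu$ small.

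\emph{Boundary $x=L$.} Here the key point is that $v_x(t,L)=(v-b_1h)/(\mu c_1)$, so the term $8\mu q_2vv_x$ becomes $\frac{8q_2}{c_1}(v^2-b_1hv)$. The factor $\mu$ cancels and the viscous boundary term survives in the limit. The contribution at $L$ is then the quadratic form with matrix
\begin{equation*}
M=\begin{pmatrix}-gV^* & -gH^*-\frac{4H^*b_1}{c_1}\\ -gH^*-\frac{4H^*b_1}{c_1} & -H^*V^*+\frac{8H^*}{c_1}\end{pmatrix}+O(\mu),
\end{equation*}
evaluated at $x=L$. I would require negative definiteness, i.e. $-H^*V^*+8H^*/c_1<0$ and $\det M>0$. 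Writing $s=4/c_1$ (note $c_1\neq0$ is implicit, since $0\notin(c_1^-,c_1^+)$ has to be checked or imposed), the determinant condition reads $gV^*(V^*-2s)>H^*(g+b_1s)^2$. This is a quadratic inequality in $s$. It has a nonempty solution set exactly when its discriminant is positive, and that discriminant condition should reduce to $b_1\notin[b_1^-,b_1^+]$ with $b_1^\pm$ as in \eqref{eq:bc_b1}. Solving and rewriting in terms of $c_1$ should produce the interval $(c_1^-,c_1^+)$ of \eqref{eq:bc_c1}. The diagonal condition should then follow automatically from the determinant condition together with $M_{11}<0$.

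The main obstacle I expect is this last algebraic step: matching the solution set in $s=4/c_1$ with the stated interval for $c_1$. This includes handling the sign of $c_1$ when inverting, and verifying that the radicand $V^*(H^*V^*b_1^2/g+2H^*b_1+V^*)$ is positive precisely when $b_1\notin[b_1^-,b_1^+]$. Once both boundary forms are strictly negative definite at $\mu=0$, uniform convergence of the $O(\mu)$ perturbations gives a $\mu_b>0$ such that $\mathcal B\le0$ for all $\mu\in(0,\mu_b)$.
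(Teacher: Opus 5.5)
Your proposal is correct and follows the paper's strategy: use \eqref{eq:linear_bc} to reduce $\mathcal B$ to a scalar form in $h(t,0)$ and a $2\times 2$ form at $x=L$, check strict negativity at $\mu=0$ under \eqref{eq:bc_coef_constraints}, and conclude by continuity in $\mu$. The only difference is at $x=L$, where you eliminate $v_x$ and work in $(h,v)$, while the paper eliminates $v$ and works in $(h,v_x)$, which is why it has to factor $\mu^2$ out of $\Delta_h$ and track the $\mu$-dependent roots $c_{1,\mu}^\pm$; the algebra you flagged closes at once, since multiplying your condition $H^*b_1^2s^2+2g(V^*+H^*b_1)s+g(gH^*-V^{*2})<0$ by $c_1^2$ gives $g(gH^*-V^{*2})c_1^2+8g(V^*+H^*b_1)c_1+16H^*b_1^2<0$, which is exactly the paper's $P_d(c_1)/(4H^*)<0$ at $\mu=0$, with roots $c_1^\pm$ and with $c_1=0$ excluded because the left side equals $16H^*b_1^2\ge 0$ there.
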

{This is shown in Section \ref{ssec:bound}.
Finally, from \eqref{eq:afterint} and \eqref{eq:lyap_I}, for any $\mu\in(0,\min(\mu_{a},\mu_{b}))$ property 2) holds and therefore, choosing ${\mu_{\circ}} = \min(\mu_{a},\mu_{b},\mu_{Q})$, Proposition \ref{prop:lyap_3_props} holds. 
{The two following subsections are devoted to the proof of Proposition \ref{prop:inner} and Proposition \ref{prop:boundary}.}
}

\subsection{Integral term}
\label{ssec:int}
\begin{proof}[Proof of Proposition \ref{prop:inner}]
We denote $D := - (QC+(QC)^{T}) - Q_{xx}A + (QB)_x$. 
{
Observe that from \eqref{eq:defQ}, \eqref{eq:defq1q2} and \eqref{eq:defq1q20}
\begin{gather}
(Q B) = \begin{pmatrix}
 (g+\mu \tilde{q}_{1}) V^{*} & (g+\mu \tilde{q}_{1}) H^{*}\\
 \ast 
 \;\; &\;\; (H^{*}+\mu \tilde{q}_{2})(V^{*}-4\mu \frac{H^{*}_{x}}{H^{*}})
\end{pmatrix}\\
    (QB)_{x} = \begin{pmatrix}
        g{V^{*}_{x}} +\mu (\tilde{q}_{1}{V^{*}})_x &  (g+\mu \tilde{q}_{1}){H^{*}_{x}}+\mu\tilde{q}_{1x}{H^{*}}\\
        \ast & -4\mu {H^{*}_{xx}}-4\mu^{2}{{H^{*}_{xx}}}
    \end{pmatrix}
\end{gather}
{where we used the fact that $(H^{*}V^{*})_x = 0$, from the steady state of \eqref{eq:sv_nonlinear_1}.
}
Denoting $f_{1}(H,V) = -f(H,V)(3\mu+2\kappa H)/(3\mu H+\kappa H^{2})$, we obtain
 \begin{equation}
  (QC+(QC)^{T}) = \begin{pmatrix}
      2(g+\mu \tilde{q}_{1}){V^{*}_{x}} & \omega_1 \\
      \ast & \omega_2
  \end{pmatrix}  
\end{equation} 
with
\begin{align}
\begin{split}
    \omega_1 &= (g+\mu \tilde{q}_{1}){H^{*}_{x}}+({H^{*}}+\mu \tilde{q}_{2}) \left( {\frac{f_{1}({H^{*}},{V^{*}})}{H^*}}+4\mu \frac{{H^{*}_{x}}{V^{*}_{x}}}{{H^{*}}^{2}}\right),
\end{split}\\
    \omega_2 &= 2({H^{*}}+\mu \tilde{q}_{2})\left({V^{*}_{x}}+\frac{f({H^{*}},{V^{*}})}{{H^*}{V^{*}}}\right).
\end{align}
Thus
 \begin{equation}
\begin{split}
\label{eq:expressionD}
    D &= { - } (QC+(QC)^{T}) - Q_{xx}A + (QB)_x\\
    &=\begin{pmatrix}
          -(g+\mu \tilde{q}_{1}){V^{*}_{x}} +\mu\tilde{q}_{1x}{V^{*}} & \omega_3 \\
      \ast & \omega_4
    \end{pmatrix},
    \end{split}
\end{equation}
with
\begin{align}
    \omega_3 &=  \mu\tilde{q}_{1x}{H^{*}}
          -({H^{*}}+\mu \tilde{q}_{2})\left(\frac{f_{1}({H^{*}},{V^{*}})}{{H^{*}}}+4\mu \frac{{H^{*}_{x}}{V^{*}_{x}}}{{H^{*}}^{2}}\right), \\
    \omega_4 &= -{ 2} ({H^{*}}+\mu \tilde{q}_{2})\left({V^{*}_{x}}+\frac{f({H^{*}},{V^{*}})}{H^{*}V^{*}}\right).
\end{align}
}

Let us define 
\begin{equation}
    f_{1}(H,V) := -f(H,V)(3\mu+2\kappa H)/(3\mu H+\kappa H^{2})
\end{equation} and following the expression of $D$ given by \eqref{eq:expressionD} 
we obtain
\begin{equation}
\label{eq:estimdetD0}
\begin{split}
&det(D) =  \left((g+\mu \tilde{q}_{1}){V^{*}_{x}} -\mu\tilde{q}_{1x}{V^{*}} \right) \left[
{2}({H^{*}}+\mu \tilde{q}_{2})({V^{*}_{x}}+\frac{f({H^{*}},{V^{*}})}{{{H^{*}}V^{*}}})
\vphantom{\frac{f}{4}}\right]   \\
& \quad\quad\quad\quad-\left[\mu\tilde{q}_{1x}{H^{*}}
          -({H^{*}}+\mu \tilde{q}_{2})\left(\frac{f_{1}({H^{*}},{V^{*}})}{{H^{*}}}\right.\right. \left.\left.+4\mu \frac{{H^{*}_{x}}{V^{*}_{x}}}{{H^{*}}^{2}}\right)\right]^{2}.
\end{split}
\end{equation}

From Proposition \ref{prop:estimVx}, and since $f(H^{*}, V^{*}) = O(\mu)$, we have $V^{*}_{x} = O(\mu)$ and we can get the following estimates for $\tilde{q}_{1}$ and $\tilde{q}_{2}$:
\begin{equation}
\label{eq:estimq1q2}
\begin{split}
&\tilde{q}_{1} = g+O(\mu),\\
&\tilde{q}_{1x} = {-\frac{C_{0}}{H^{*}V^{*}}\left(g H^{*}-{V^{*}}^{2}\right)e^{-\frac{1}{4\mu}\int_{0}^{x}{\Gamma(s)}ds}} +O(\mu),\\
&\tilde{q}_{2xx} = O(1),
\end{split}
\end{equation}
 where $O(1)$ (resp. $O(\mu)$) refers to a function that is bounded (resp. tends to 0 at least as fast as $\mu$) in $L^{\infty}(0,L)$.

From Proposition \ref{prop:estimVx} and \eqref{eq:estimq1q2}, \eqref{eq:estimdetD0} becomes
\begin{equation}
    \label{eq:estimdetD1}
\begin{split}
det(D) =  {2}g{V^{*}_{x}}^{2}{H^{*}}
+{2}g{V^{*}_{x}}\frac{f({H^{*}},{V^{*}})}{{V^{*}}} -f_{1}({H^{*}},{V^{*}})^{2}+{\mathcal{Q}}+O(\mu^{3}),
\end{split}
\end{equation}
{where
\begin{equation}
    \begin{split}
        \mathcal{Q}:=&-2\mu \widetilde{q}_{1x}V^{*}H^{*}\left(V_{x}^{*}+\frac{f(H^{*},V^{*})}{H^{*}V^{*}}\right) -\mu^{2} \widetilde{q}_{1x}^{2}{H^{*}}^{2}+2\mu \widetilde{q}_{1x}H^{*}f_{1}(H^{*},V^{*})
    \end{split}
\end{equation}
}

{ Let us study $\mathcal{Q}$:
\small \begin{equation}
\begin{split}
\label{eq:estimQ}
\mathcal{Q}
=&-\mu\tilde{q}_{1x}\left[2V^{*}H^{*}\left(V_{x}^{*}+\frac{f(H^{*},V^{*})V^{*}}{H^{*}(gH^{*}-{V^{*}}^{2})}\left(\frac{g H^{*}}{{V^{*}}^{2}}-1\right)\right)-2f_{1}(H^{*},V^{*})H^{*}+{H^{*}}^{2}\mu\tilde{q}_{1x}\right]\\
=&-\mu\tilde{q}_{1x}\left[2V^{*}H^{*}\left(V_{x}^{*}+\frac{f(H^{*},V^{*})V^{*}}{H^{*}(gH^{*}-{V^{*}}^{2})}\left(\frac{g H^{*}}{{V^{*}}^{2}}-1\right)-\frac{C_{0}\mu}{2}\left(\frac{g H^{*}}{{V^{*}}^{2}}-1\right)e^{-\frac{1}{4\mu}\int_{0}^{x}{\Gamma(s)}ds}
\right)\right.\\
&\left.-2f_{1}(H^{*},V^{*})H^{*}+O(\mu^{2})\right]\\
=&-\mu\tilde{q}_{1x}\left[2V^{*}H^{*}\left(\frac{gH^{*}}{{V^{*}}^{2}}V_{x}^{*}+\frac{C_{0}\mu}{2}\left(\frac{g H^{*}}{{V^{*}}^{2}}-1\right) 
\left( e^{-\frac{1}{4\mu}\int_{0}^{x}{\Gamma(s)}ds}\right)\right) -2f_{1}(H^{*},V^{*})H^{*}+O(\mu^{2})\right]  \\
=&\mu^{2}\frac{C_{0}}{H^{*}V^{*}}(gH^{*}-{V^{*}}^{2})e^{-\frac{1}{4\mu}\int_{0}^{x}{\Gamma(s)}ds}\Big[2V^{*}H^{*}\left( \mu^{-1} \frac{gH^{*}}{{V^{*}}^{2}}V_{x}^{*}+\frac{C_{0}}{2}\left(\frac{g H^{*}}{{V^{*}}^{2}}-1\right)e^{-\frac{1}{4\mu}\int_{0}^{x}{\Gamma(s)}ds}\right)\\
&-2\mu^{-1}f_{1}(H^{*},V^{*})H^{*}\Big]+O(\mu^{3}).
\end{split}
\end{equation}
\normalsize
Note that, from the definition of $f$ and $f_{1}$, and since $H^{*}\geq 0$, $C_{0}\geq 0$ and $V^{*}\geq 0$, $f_{1}(H^{*},V^{*})\leq 0$. Also, recall that $C_{0}$ is a positive constant and $gH^{*}>{V^{*}}^{2}$ from \eqref{eq:subcritical}. Finally, recall that from Proposition \ref{prop:estimVx} we have that for any $\mu\in(0,\mu^{*})$, $V_{x}^{*} \geq 0$. Using all this in \eqref{eq:estimQ}
\begin{equation}
    \mathcal{Q} = \mathcal{Q}^{0}+O(\mu^{3}),
\end{equation}
where $\mathcal{Q}^{0}\geq 0$. Using this in \eqref{eq:estimdetD1}, as well as the definition of $f_{1}({H^{*}},{V^{*}})$, we have
}
\begin{equation}
    \begin{split}
        det(D) &= \left(2g{V_{x}^{*}}H^{*}+2g\frac{f(H^{*},V^{*})}{V^{*}}\right)V_{x}^{*} -\frac{4(f(H^{*},V^{*}))^{2}}{{H^{*}}^{2}}
        +\mathcal{Q}_{0}+O(\mu^3).
    \end{split}
\end{equation}
From Proposition~\ref{prop:estimVx}, we have
\begin{equation}
  \mu C_{0} e^{-\frac{1}{4\mu}\int_{0}^{x}{\Gamma(s)}ds} = S +O(\mu^{2}) \geq 0
\end{equation}
with $C_0$ and ${\Gamma(s)}$ defined in Proposition~\ref{prop:estimVx} and
\begin{equation}
    S \coloneqq \frac{V^{*}f(H^{*},V^{*})}{H^{*}(gH^{*}-{V^{*}}^{2})} - V_{x}^{*}
\end{equation}
Using this in \eqref{eq:estimQ} (and the definition of $f_{1}$), we have
\begin{equation}
\label{eq:estimint0}
\begin{split}
    \mathcal{Q}^{0} &= S\frac{gH^{*}-{V^{*}}^{2}}{H^{*}V^{*}}
    \left[
    2g{H^{*}}^{2} \frac{V_{x}^{*}}{V^{*}}+ SV^{*}H^{*}\left(\frac{gH^{*}}{{V^{*}}^{2}}-1\right)+4 f(H^{*},V^{*})
    \right]+O(\mu^{3})
\end{split}
\end{equation}
{(note that $f(H^*,V^*)$, $V_x^*$ and thus $S$ are all $O(\mu)$)}. As a consequence, developping $S$ in the last term of \eqref{eq:estimint0} we have
\begin{equation}
\label{eq:defD0}
    \begin{split}
           &det(D) = \left(2g{V_{x}^{*}}H^{*}+2g\frac{f(H^{*},V^{*})}{V^{*}}\right)V_{x}^{*}-\frac{4(f(H^{*},V^{*}))^{2}}{{H^{*}}^{2}}\\
        &+S\frac{gH^{*}-{V^{*}}^{2}}{H^{*}V^{*}}
    \left[
    2g{H^{*}}^{2} \frac{V_{x}^{*}}{V^{*}}+ SV^{*}H^{*}\left(\frac{gH^{*}}{{V^{*}}^{2}}-1\right)\right]\\
    &+4 \left(\frac{f(H^{*},V^{*})}{H^{*}}\right)^{2}-4f(H^{*},V^{*}) V_{x}^{*}\frac{gH^{*}-{V^{*}}^{2}}{H^{*}V^{*}}
    +O(\mu^3)\\
    &= D_{0}(\mu,x)+O(\mu^3)
    \end{split}
\end{equation}
where
\small
\begin{equation}
    \begin{split}
    &D_0(\mu, x) \coloneqq S\frac{gH^{*}-{V^{*}}^{2}}{H^{*}V^{*}}
    \left[
    2g{H^{*}}^{2} \frac{V_{x}^{*}}{V^{*}}+ SV^{*}H^{*}\left(\frac{gH^{*}}{{V^{*}}^{2}}-1\right)\right] \\
    &+\left(2g{V_{x}^{*}}H^{*}+2g\frac{f(H^{*},V^{*})}{V^{*}}
    -4f(H^{*},V^{*}) \frac{gH^{*}-{V^{*}}^{2}}{H^{*}V^{*}}
    \right)V_{x}^{*}
    \end{split}
\end{equation}
\normalsize
Note that from Proposition \ref{prop:estimVx}, $V_{x}^{*}\geq 0$ and $S\geq 0$.
Also from Proposition \ref{prop:estimVx}, we have
\begin{equation}
    \begin{split}
    &\lim\limits_{\mu\rightarrow 0}\mu^{-1}S\rightarrow 0,\;\;\text{ if }x\in(0,L],\\
    &V_{x}^*(0) = 0,
    \end{split}
\end{equation}
and $f({H^{*}},{V^{*}})\mu^{-1}$ converges (in $C^{0}([0,L]$) to a positive function $\tilde{f}$ from \eqref{def:f}. As a consequence
\begin{equation}
\label{eq:lim0}
    \lim\limits_{\mu\rightarrow 0}\mu^{-2} D_{0}(\mu,0)= \left(\frac{V_{0}\tilde{f}(0)}{H_{0}}\right)^{2}H_{0}\left(\frac{gH_{0}}{{V_{0}}^{2}}-1\right),
\end{equation}
and, for $x\in(0,L]$
\begin{equation}
\label{eq:limn0}
\begin{split}
    \lim\limits_{\mu\rightarrow 0}\mu^{-2} &D_{0}(\mu,x)= \frac{V_{0}\tilde{f}^{2}(x)}{H_{0}(gH_{0}-V_{0}^{2})}
    \left[
    2g\frac{V_{0}}{gH_{0}-V^{2}_{0}}+\frac{2g}{V_{0}}-4\frac{gH_{0}-V_{0}^{2}}{H_{0}V_{0}}
    \right].
    \end{split}
\end{equation}
{Note that we used the fact that $V^{*}(x)\rightarrow V_{0}$ and $H^{*}(x)\rightarrow H_{0}$ when $\mu\rightarrow 0$ for any $x>0$ (see Proposition \ref{prop:estimVx}).}
{
We have
\begin{equation}
\begin{split}
        &2g\frac{V_{0}}{gH_{0}-V^{2}_{0}}+\frac{2g}{V_{0}}-4\frac{gH_{0}-V_{0}^{2}}{H_{0}V_{0}}\\
        =&\frac{1}{H_{0}V_{0}(gH_{0}-V^{2}_{0})}\left(2(gH_{0})^{2}-4(gH_{0}-V_{0}^{2})^{2}\right)\\
        =&\frac{1}{H_{0}V_{0}(gH_{0}-V^{2}_{0})}\left(-2(gH_{0})^{2}+8gH_{0}V_{0}^{2}-4 V_{0}^{4}\right).
        \end{split}
\end{equation}
A simple polynomial analysis shows that $-2(gH_{0})^{2}+8gH_{0}V_{0}^{2}-4V_{0}^{4}>0$ as long as $(2+\sqrt{2})V_{0}^{2}>gH_{0}>(2-\sqrt{2})V_{0}^{2}$, which is true here from \eqref{eq:assumpsteady} and \eqref{eq:subcritical}. This together with \eqref{eq:lim0} and \eqref{eq:limn0} implies that there exists $c>0$ such that
\begin{equation}
\lim\limits_{\mu\rightarrow 0}\min(\mu^{-2}D_{0}(\mu,x),c) = c>0,\;\text{ for any }x\in[0,L].
\end{equation}
Note that $(\mu,x)\mapsto \min(\mu^{-2}D_{0}(\mu,x),c)$ is continuous on $(0,\mu^{*}]\times [0,L]$,  where $\mu^{*}$ is given by Proposition \ref{prop:estimVx}. From the expression of $D_{0}$ and Proposition \ref{prop:estimVx}, there exists $C>0$ independent of $\mu$ (but maybe depending on $\mu^{*}$) such that $|\partial_{x}D_{0}(\mu,\cdot)|_{L^{\infty}}<C$ for any $\mu\in(0,\mu^{*})$.
 As a consequence $(\mu,x) \mapsto \mu^{-2}D_{0}(\mu,x)$ is $C^{0}$ in $\mu=0$ and 
 there exists 
$\mu_{0}\in{(0,\mu^{*}]}$ 
such that 
\begin{equation}
    \mu^{-2}D_{0}(\mu,x) \geq c>0,\text{ for any }\mu\in(0,\mu_{0}],\,\,x\in[0,L].
\end{equation}
}
Therefore, {from \eqref{eq:defD0},} there exists $\mu_{{a}}\in{(0,\mu_{0}]}$ such that, for any $\mu\in(0,\mu_{{a}})$,
\begin{equation}
    \text{det}(D) >0,\text{ for any }x\in[0,L].
\end{equation}
To conclude it suffices to note that, since $\mu>0$ is chosen such that $V_{x}\geq 0$, then 
\begin{equation}
    { 2} ({H^{*}}+\mu \tilde{q}_{2})\left({V^{*}_{x}}+\frac{f({H^{*}},{V^{*}})}{{{H^{*}}V^{*}}}\right)>0\text{ on }[0,L]
\end{equation}
and therefore {by Sylvester's criterion}, from the expression of $D$ given by \eqref{eq:expressionD}, $D$ is negative definite. This implies that there exists $\gamma > 0$ such that $\phi(\mu{, \gamma}) = \gamma Q + D$ is negative definite for any $x\in[0,L]$. This ends the proof of Proposition \ref{prop:inner}.   

\end{proof}

\begin{rmk}
{Note that} we cannot directly derive Proposition \ref{prop:inner} from \cite{bastin2017quadratic} and looking at the situation $\mu=0$. Indeed, when $\mu\rightarrow 0$, $(H^{*},V^{*})$ converges to a constant steady-state and the source term of the equation converges to $0$ and in this case the Lyapunov function found in \cite{bastin2017quadratic} does not work anymore. 
\end{rmk}

To conclude, let us now consider the full integral term \eqref{eq:lyap_I}:
\begin{equation}
    \mathcal{I} = \int_{0}^{L} y^T \phi(\mu, \gamma) y \; dx + 2 \int_0^L y_x^T QA y_x \; dx
\end{equation}

From Proposition~\ref{prop:inner}, $\phi(\mu, \gamma)$ is negative definite for all $\mu \in (0, {\mu_a})$ for some $\gamma, {\mu_a} > 0$. As a consequence, $y^T \phi(\mu, \gamma) y \leq 0$ for any $y(t,x)$, and thus $\int_0^L y^T \phi(\mu, \gamma) y \; dx \leq 0$.

Regarding the second integral, we have \[QA = \begin{pmatrix}0 & 0 \\ 0 & -4\mu(H^{*} + \mu \tilde{q_2})\end{pmatrix}\] where $H^* + \mu \tilde{q_2}$ is positive for $\mu \in (0, \mu_Q)$, with the same $\mu_Q > 0$ that was previously introduced. Thus $QA$ is semidefinite negative, leading to $\int_0^L y^T_x QA y_x \; dx \leq 0$. This concludes the proof that $\mathcal{I} \leq 0$.

\subsection{Boundary term}
\label{ssec:bound}

\begin{proof}[Proof of Proposition \ref{prop:boundary}]
Let us first expand the definitions of $Q$ and $A$ in the boundary term \eqref{eq:lyap_B}, which yields
\begin{equation}
    \mathcal{B} = \left[ -y^TQBy - 4\mu q_{2x} v^2 + 8 \mu q_2 vv_x \right]_0^L
\end{equation}
where $q_{2x} = (q_2)_x$. Furthermore, we can derive the following equalities from the linearized boundary conditions \eqref{eq:linear_bc}:
\begin{equation}
    \begin{split}
        y(t,0) &= \begin{psmallmatrix} 1 \\ -b_0 \end{psmallmatrix} h(t,0) \\
        y(t,L) &= \begin{psmallmatrix}1 \\ b_1\end{psmallmatrix} h(t,L) + \begin{psmallmatrix}0 \\ \mu c_1\end{psmallmatrix} v_x(t,L) \\
        v(t,0)^2 &= b_0^2 h(t,0)^2 \\
        v(t,L)^2 &= b_1^2 h(t,L)^2 + \mu^2 c_1^2 v_x(t,L)^2 + 2 \mu b_1 c_1 h(t,L) v_x(t,L)
    \end{split}
    \label{eq:linear_bc_csq}
\end{equation}
Using these equations \eqref{eq:linear_bc_csq}, the boundary term can be expanded as follows:
\begin{equation}
\begin{split}
    \mathcal B = & \; a_1(\mu) h(t,0)^2 + a_2(\mu) h(t,L)^2 + a_3(\mu) v_x(t,L)^2+ a_4(\mu) h(t,L) v_x(t,L)
\end{split}
\end{equation}
with
\begin{align}
    a_1(\mu) &= \begin{psmallmatrix}1 \\ -b_0\end{psmallmatrix}^T (QB)(0) \begin{psmallmatrix}1 \\ -b_0\end{psmallmatrix} + 4\mu b_0^2 q_{2x}(0) \\
    a_2(\mu) &= -\begin{psmallmatrix}1 \\ b_1\end{psmallmatrix}^T (QB)(L) \begin{psmallmatrix}1 \\ b_1\end{psmallmatrix} - 4\mu b_1^2 q_{2x}(L) \\
    a_3(\mu) &= -\begin{psmallmatrix}0 \\ \mu c_1\end{psmallmatrix}^T (QB)(L) \begin{psmallmatrix}0 \\ \mu c_1\end{psmallmatrix} + 8 \mu^2 c_1 q_2(L) - 4 \mu^3 c_1^2 q_{2x}(L) \\
    a_4(\mu) &= -2 \begin{psmallmatrix} 1 \\ b_1 \end{psmallmatrix}^T (QB)(L) \begin{psmallmatrix}0 \\ \mu c_1\end{psmallmatrix} + 8 \mu b_1q_2(L) - 8 \mu^2 b_1 c_1 q_{2x}(L) \label{eq:a4}
\end{align}
where in \eqref{eq:a4} we used the fact that $QB$ is symmetric \eqref{eq:defq1q2}.

We know from \cite{bastin2017quadratic} that under the conditions that $b_0 \in (b_0^-, b_0^+)$ and $b_1 \in \mathbb{R} \setminus [b_1^-, b_1^+]$ {(as defined in \eqref{eq:bc_b0} and \eqref{eq:bc_b1})}, 
we have $a_1(\mu=0) < 0 $ and $a_2(\mu=0) < 0$
where
\begin{align}
    a_1(\mu=0) &:= \begin{psmallmatrix}1 \\ -b_0\end{psmallmatrix}^T (QB)(0) \begin{psmallmatrix}1 \\ -b_0\end{psmallmatrix}, \\
    a_2(\mu=0) &:= -\begin{psmallmatrix}1 \\ b_1\end{psmallmatrix}^T (QB)(L) \begin{psmallmatrix}1 \\ b_1\end{psmallmatrix}.
\end{align}
Note that $a_1$ and $a_2$ are continuous functions of $\mu$, thus there exists ${\mu_1} > 0$ such that for any $\mu \in (0, {\mu_1})$, $a_1(\mu) < 0$ and $a_2(\mu) < 0$.
We assume in the following that $\mu\in(0,{\mu_1})$.
Let us now look at all the terms at $x=L$. We use the following notations for the coefficients of $QB(L)$:
\begin{equation}
\label{eq:notationM}
    QB(L) = \begin{pmatrix}
        \alpha & \beta \\ \beta & \gamma
    \end{pmatrix}
\end{equation}
For simplicity of the notations, we assume in the following that all functions are evaluated at $x=L$ and omit writing it. Developping all terms, we get:
\begin{align}
    a_2 &= -\alpha -2\beta b_1 -\gamma b_1^2 -4\mu b_1^2  q_{2x} \\
    a_3 &= \mu^2 (-\gamma c_1^2 +8 c_1 q_2 -4\mu c_1^2 q_{2x}) \\
    a_4 &= \mu (-2\beta c_1 -2\gamma b_1 c_1 +8 b_1 q_2 -8\mu b_1 c_1  q_{2x})
\end{align}
In order to show $\mathcal{B} \leq 0$ for any $y\in L^{2}(0,L)$, it suffices that (and in fact it is a necessary condition if one replaces the strict inequality by a large one)
\begin{equation}
    a_2h^2 + a_3v_x^2 + a_4 hv_x < 0, \quad \forall h,v\in\mathbb{R}.
    \label{eq:B_neg_a234}
\end{equation}
This holds if and only if
\begin{equation}
a_{2}<0\text{ and } \Delta_h := a_{4}^{2}-4a_{2}a_{3} <0.
\label{eq:young_condition}
\end{equation}
From the choice of ${\mu_1}$ we know that $a_2 < 0$ so the first condition is satisfied. As for the second condition, after simplifying we obtain:
\begin{equation}
    \Delta_h = \mu^2 ( d_1 c_1^2 + d_2 c_1 + d_3 )
\label{eq:poly_d}
\end{equation}
where $d_1$, $d_2$ and $d_3$ are independent of $c_1$ and given by
\begin{align}
\begin{split}
    d_1 &= -4 \alpha \gamma + 4 \beta^2 -16 \alpha \mu  q_{2x} \\
    & = -4 \det (QB(L)) -16 \alpha \mu  q_{2x}
\end{split} \\
    d_2 &= 32 q_2 (\alpha + \beta b_1) \\
    d_3 &= 64 b_1^2 q_2^2
\end{align}

Consider the case where $\mu=0$:
\begin{equation}
    \det QB(L) \stackrel{\mu=0}{=}
    \begin{vmatrix}g{V^{*}} & g{H^{*}} \\ g{H^{*}} & {H^{*}}{V^{*}}\end{vmatrix}
    = g{H^{*}}({V^{*}}^2 - g{H^{*}}) < 0
\end{equation}
because of the subcritical flow assumption~\eqref{eq:subcritical}. As a result, since $d_1$ is continuous in $\mu$, there exists ${\mu_2} > 0$ such that for any $\mu \in (0, {\mu_2})$, $d_1 > 0$.

Thus the polynomial $P_d(c_1) := d_1c_1^2 + d_2c_1 + d_3$ in variable $c_1$ from \eqref{eq:poly_d} has a positive leading coefficient,
and there exists $c_1 \in \mathbb R$ such that $P_d(c_1) < 0$ if and only if $\Delta_d \coloneqq d_2^2 - 4d_1d_3 > 0$.
We get:
\begin{equation}
    \Delta_d = -1024 q_2^2 \alpha a_2.
\end{equation}
We have seen before that $a_2 < 0$ given our choice of $b_1$ and for $\mu \in (0, {\mu_1})$, and there exists ${\mu_3}>0$ such that for any $\mu \in (0,{\mu_3})$, $\alpha > 0$ since $\alpha$ is continuous in $\mu$ and when $\mu=0$, $\alpha(\mu=0) = g{V^{*}} > 0$. Thus, $P_d(c_1) < 0$ for any $c_1 \in (c_{1,\mu}^-, c_{1,\mu}^+)$ with
\begin{equation}
    c_{1,\mu}^\pm \coloneqq \left. \frac{4 H^*\left(\zeta \left(\mu + 1\right) \left(H^* b_{1} + V^*\right) \pm \sqrt{\delta}\right)}{\zeta \left( (\mu+1) V^{*2} - \zeta\right)} \right|_{x=L},
\end{equation}
where
\begin{equation}
\begin{split}
    \delta(x) & \coloneqq  V^* \zeta \left(\mu + 1\right)^{2} (H^* b_{1}^{2} \left(\mu + 1\right) \left(H^* V^* - 4 H_{x}^* \mu\right)  + 2 H^* b_{1}  \left(2 H_{x}^* b_{1} \mu \left(\mu + 1\right) + \zeta\right) + V^* \zeta )
\end{split}
\end{equation}
and
\begin{equation}
\begin{split}
    \zeta(x) &\coloneqq gH^* + \mu \left(H^* + \tilde{q_1} \right) \\ &= gH^* + \mu \left(H^* + g - 4(1+\mu) \frac{V_x^*}{H^*} \right).
\end{split}
\end{equation}
We can note that there exists ${\mu_4} > 0$ such that for any $\mu \in (0, {\mu_4})$, if $c_1 \in (c_1^-, c_1^+) \coloneqq (c_{1,\mu=0}^-, c_{1,\mu=0}^+)$ {(as defined in~\eqref{eq:bc_c1}),}
then $c_1 \in (c_{1,\mu}^-, c_{1,\mu}^+)$, since 
 $c_{1,\mu}^\pm$ are both continuous in $\mu$. Thus, for $\mu \in (0, {\mu_4})$, $P_d(c_1)<0$ if $c_1 \in (c_1^-, c_1^+)$.

Consequently, {selecting $\mu_{b} = \min({\mu_1,\mu_2,\mu_3,\mu_4})$ for $\mu \in (0, \mu_{b})$}, the second condition in \eqref{eq:young_condition} is satisfied, which concludes the proof that $\mathcal{B} \leq 0$ for adequately chosen values of $b_0$, $b_1$ and $c_1$.
\end{proof}

\section{{Numerical Simulations}}

{
We present in Figure~\ref{fig:fig1num} a numerical illustration of Theorem~\ref{th:main}. The simulation was performed with an Implicit-Explicit (IMEX) Scheme, using $L = 1\,\text{km}$, $T = 3500\,\text{s}$, $g = 9.81\,\text{m}\cdot\text{s}^{-2}$, $\mu = 0.001\,\text{m}^{2}\cdot\text{s}^{-1}$, and $\kappa = 0.002\,\text{m}\cdot\text{s}^{-1}$. The values $H^{*}$ and $V^{*}$ are obtained as the unique (time-stationary) solution of  \eqref{eq:sv_nonlinear_1}–\eqref{eq:sv_nonlinear_2} with initial condition $V(0) = 1\,\text{m}\cdot\text{s}^{-1}$, $H(0) = 4\,\text{m}$, and $V_{x}(0)=0$, solved using an LSODA solver~\cite{hindmarsh1983odepack}. The boundary coefficients are chosen as $b_{0}= g/V^{*}(0)$ and
\begin{equation}
b_{1} = 
g \sqrt{ \frac{1}{V^*(L)^2} - \frac{1}{g H^*(L)} } 
,\;\; c_{1} =\frac{4 \left( V^*(L) + b_1 H^*(L)\right)}{V^*(L)^{2} - g H^*(L)}.
\end{equation}

The discretization parameters are $dt = 0.033\,\text{s}$ and $dx = 0.5\,\text{m}$, chosen to satisfy the CFL condition associated with the scheme. The initial perturbations are set as $h(x) =0.01\cos(20x+15)$ and $v(x) = 0.01\cos(x)$. We plot the evolution of the $L^{2}$ norm of the solution in both linear and logarithmic scales. Note that the logarithmic scale is shown on a much longer time horizon to highlight the exponential decay.}

\begin{figure}
    \centering
    \includegraphics[width=0.99\linewidth]{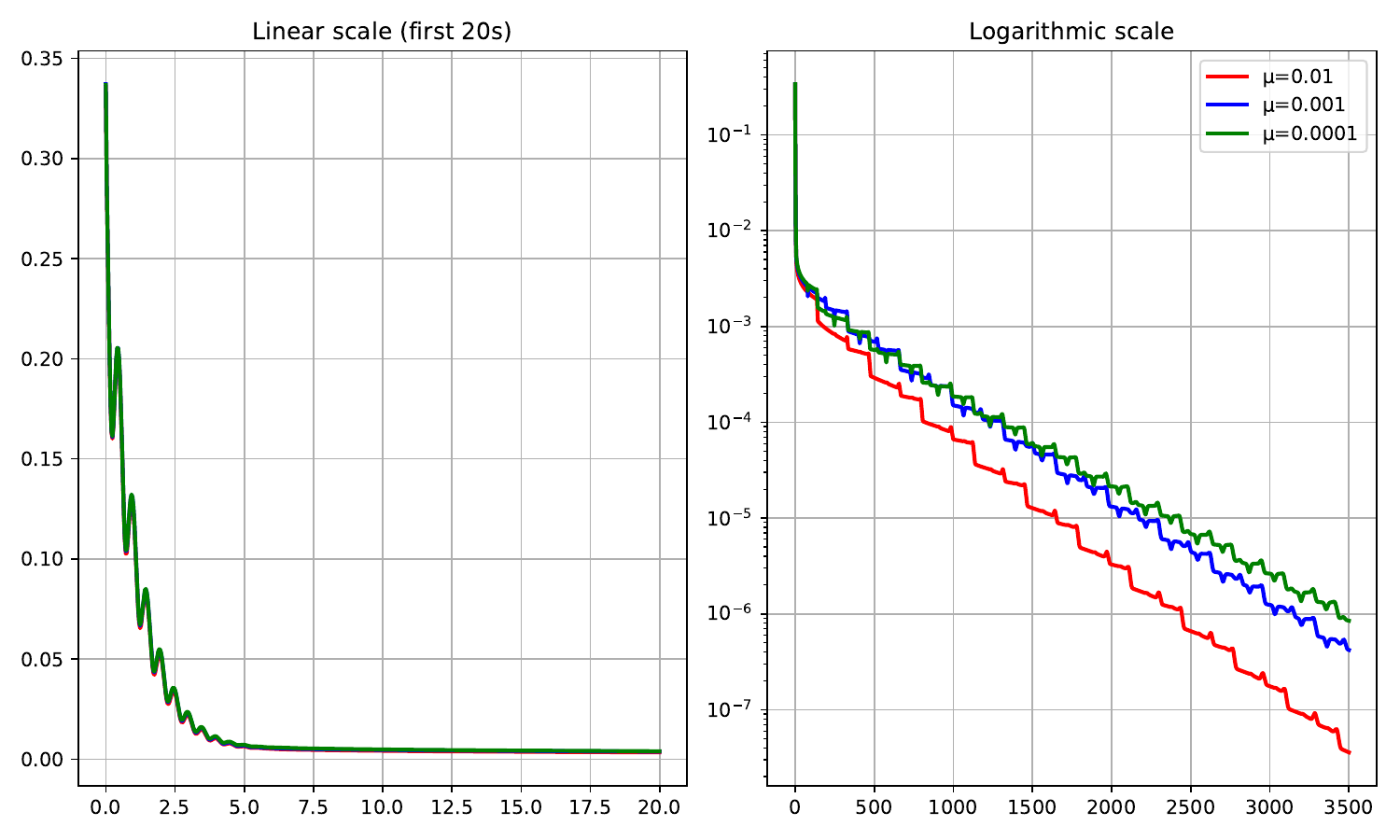}
\caption{{Time evolution of the $L^{2}$-norm of the solution $(h,v)$ of \eqref{eq:linear_system} for various viscosity values, on a \textbf{linear} (left) and \textbf{logarithmic} (right) scale.}}
    \label{fig:fig1num}
\end{figure}

\section{Conclusion}
\label{sec:conclusion}

In this paper, we showed the exponential stability of the steady-states for the {linearized} viscous Saint-Venant equations, by constructing a quadratic Lyapunov functions and carefully designing the boundary feedback controls. Quadratic Lyapunov functions are often robust to non-linearities, which is promising for the stability of the non-linear system, however that remains an open question.

\appendix

{
\section{Proof that $Q$ must be diagonal in physical coordinates}
\label{app:proof_Q_diag}

\begin{proof}[Proof of Proposition~\ref{prop:Q_diag}]
    First, let us note that we can without loss of generality assume that $Q$ is symmetric, since for any matrix $Q$, $y^T Q y = y^T \left( \frac{Q+Q^T}{2} \right) y$. Thus, let us assume a general symmetric form of $Q$:
    \begin{equation}
        Q = \begin{pmatrix}
            q_1 & q_3 \\ q_3 & q_2
        \end{pmatrix}
    \end{equation}
    then
    \begin{equation}
        {W}(h,v) = \int_0^L (q_1 h^2 + q_2 v^2 + 2q_3hv ) \; dx.
    \end{equation}
    Assuming that ${W}$ is a Lyapunov function, it must be positive for any $(h,v) \neq (0,0)$, which implies $q_1 > 0$ and $q_2 > 0$.
    
    The last term in our integral term $\mathcal{I}$ in \eqref{eq:lyap_I} is 
    \begin{align*}
        \mathcal{I}_{y_x} &= 2\int_0^L y^T_x QA y_x \; dx \\
        &= 2\int_0^L \begin{pmatrix}h_x \\ v_x\end{pmatrix}^T \begin{pmatrix}
            0 & -4\mu q_3 \\ 0 & -4\mu q_2
        \end{pmatrix} \begin{pmatrix}h_x \\ v_x\end{pmatrix} \; dx \\
        &= -8\mu \int_0^L (q_3 h_xv_x + q_2 v_x^2) \; dx
    \end{align*}
    In order to be able to prove that $\mathcal{I} + \mathcal{B} \leq 0$ for any $h,v$, we need $\mathcal{I}_{y_x} \leq 0$ and consequently that $q_{3}=0$. Indeed, if there exists $x_{1}\in [0,L]$  such that $q_{3}(x_{1})\neq0$, since $q_{3}$ is continuous there exists a non-empty interval $l$ such that $q_{3}>0$ on $l$, and we can consider a sequence of compactly-supported functions $h^{n}$ and $v^{n}$ such that the integral of $-8\mu q_{3}h_{x}v_{x}$ diverges to $+\infty$ while the integral of $-8\mu q_{2}v_{x}^{2}$ remains bounded, as well as the remaining term $\mathcal{I}-\mathcal{I}_{y_x}$ (note that $\mathcal{B}=0$ for compactly supported functions).
    {See also \cite[Theorem 3.2]{hayat2018exponentialstabilitygeneral1d} or \cite[Lemma 1]{bastin2011boundary} for similar arguments.}
\end{proof}
}

\section{Derivation of the Lyapunov function}
\label{app:lyapunov_derivation}

By substitution of $y_t$ and since $Q$ is diagonal, we have:
\begin{align}
    \dot{\bf {W}} + \gamma {\bf {W}} &= \int_0^L (2 y^T Q y_t + \gamma y^T Q y) \; dx \\
    &= -2 \int_0^L y^T Q (Ay_{xx} + By_x + Cy) \; dx +  \gamma \int_0^L y^T Q y \; dx
\end{align}
Given a symmetric matrix $M \in \mathbb{R}^{2 \times 2}$, we have $[y^TMy]_0^L = \int_0^L (y^TMy)_x \; dx = \int_0^L (y^T_xMy + y_TM_xy + y_TMy_x) \; dx$. Since $M$ is symmetric, $y^T_xMy = y^TMy_x$ and we get the following identity:
\begin{equation}
    \int_0^L y_x^T M y \; dx = \frac{1}{2} [y^TMy]_0^L - \frac{1}{2} \int_0^L y_TM_xy \; dx
    \label{eq:A_identity}
\end{equation}
We first derive the term in $y_{xx}$ with an integration by parts followed by applying identity \eqref{eq:A_identity} ($QA$ being diagonal thus symmetric) and the fact that $(QA)_x = Q_xA$ and $(QA)_{xx} = Q_{xx}A$ since $A_x=0$:
\begin{equation}
\begin{split}
    \int_0^L y^T QA y_{xx} \; dx &= \left[y^T QA y_x\right]_0^L - \int_0^L y^T_x QA y_x \; dx - \int_0^L y^T Q_xA y_x\;  dx
\end{split}
\end{equation}
where
\begin{equation}
    - \int_0^L y^T Q_xA y_x\;  dx = - \frac 12 \left[ y^T Q_xAy \right]_0^L + \frac 12 \int_0^L y^T Q_{xx}A y\;  dx.
\end{equation}
We then derive the term in $y_x$ again using identity \eqref{eq:A_identity}, $QB$ being symmetric by construction \eqref{eq:defq1q2}:
\begin{align}
    \int_0^L y^T QB y_x \; dx &= \frac 12 \left(\left[ y^T QB y \right]_0^L -  \int_0^L y^T (QB)_x y \; dx\right)
\end{align}
Finally, note that $y^TQCy = y^T(QC)^Ty$ thus $2y^TQCy = y^T(QC+(QC)^T)y$.
Putting it all together, we get:
\begin{align}
\begin{split}
    \dot{\bf {W}} + \gamma {\bf {W}} &= \left[ y^T (Q_xA - QB) y -2 y^T QA y_x\right]_0^L \\ &+ \int_0^L y^T \left(\gamma Q - (QC+(QC)^T)  -Q_{xx}A + (QB)_x \right) y \; dx + 2\int_0^L y^T_x QA y_x \; dx
\end{split}
\end{align}
which concludes the derivation.

\section{Existence of the steady-state and asymptotic estimation of $V^{*}_{x}$ {and $V^{*}_{xx}$}}
\label{app:estimVx}
In this Appendix we show Proposition \ref{prop:estimVx}.
Our goal is to show that for any positive $H_{0}$, $V_{0}$ such that $gH_{0}-V_{0}^{2}>0$, there exists a unique solution $(H^{*},V^{*})\in C^{2}([0,L];(0,+\infty))$ to
\begin{equation}
\label{eq:sys1HV}
\begin{cases}
&(H^{*}V^{*})_x = 0,\\
&V^{*}V_x^{*} + gH_x^{*} + \frac{f(H^{*}, V^{*})}{H^{*}} - 4 \mu  \frac{(H^{*}V_x^{*})_x}{H^{*}} = 0,\\
&H^{*}(0) = H_{0},\;\;V^{*}(0)=V_{0},\;\; 
V_{x}^{*}(0)=0,
\end{cases}
\end{equation}
where $f$ is given by \eqref{def:f}.
Denoting $Q^{*} = H^{*}V^{*}$ this is equivalent to show that for any positive $Q_{0}$, $V_{0}$ such that $g Q_{0}-V_{0}^{3}>0$, there exists a unique solution $(Q^{*}, V^{*})\in C^{2}([0,L];(0,+\infty))$ to
\begin{equation}
\label{eq:sys2QV}
\begin{cases}
Q_{x}^{*}= 0\\
V^{*}V_{x}^{*} - \frac{gQ^{*}}{{V^{*}}^{2}}V_{x}^{*} + 3\mu\frac{\tilde{f}(V^{*})V^{*}}{Q^{*}} - 4 \mu V_{xx}^{*} +4\mu \frac{(V_{x}^{*})^{2}}{V^{*}}= 0,\\
Q^{*}(0) = Q_{0},\;\;V^{*}(0) = V_{0},\;\ V^{*}_{x}(0) = 0,
\end{cases}
\end{equation}
where {we used the fact that the first equation can be rewritten as $H^{*}_x = - \frac{Q^{*} V^{*}_{x}}{(V^*)^{2}}$, and}
\begin{equation}
\label{eq:deftildef}
\tilde{f}(V^{*}) = \frac{{V^{*}}^{2}\kappa}{3\mu V^{*}+\kappa Q^{*}}.
\end{equation}
The first equation, {along with the initial condition}, reduces to $Q^{*}=Q_{0}$, thus the system reduces to
\begin{equation}
\label{eq:sys3}
\begin{cases}
V^{*}V_{x}^{*} - \frac{gQ_{0}}{{V^{*}}^{2}}V_{x}^{*} + 3\mu\frac{\tilde{f}(V^{*})V^{*}}{Q_{0}} - 4 \mu V_{xx}^{*} +4\mu \frac{(V_{x}^{*})^{2}}{V^{*}}= 0,\\
V^{*}(0) = V_{0}, \;\;V^{*}_{x}(0) = 0.
\end{cases}
\end{equation}
The difficulty is that this system is singular: the terms in $\mu$ are also the terms with the highest order derivatives. Asymptotic behavior of singular systems are usually difficult to handle 
and nothing guarantees a priori, that there exists a unique solution in $C^{2}([0,L];(0,\infty))$ for $\mu>0$ small with an estimate of the form
\begin{equation}
V^{*} = V_{\text{inviscid}} + O(\mu),
\end{equation}
where $V_{\text{inviscid}}$ is the solution of the system when $\mu=0$ (in this case $V_{\text{inviscid}}(x) = V_{0}$, for any $x\in[0,L]$). Denoting $(y,z)  = (V,V_{x})$, the system \eqref{eq:sys3} becomes
\begin{equation}
\label{eq:sys4}
    \begin{cases}
        &y_{x} = z\\
        & \mu z_{x} = \frac{1}{4}z\left(y-g \frac{Q_{0}}{y^{2}}\right)+\frac{3}{4}\mu \frac{\tilde{f}(y)y}{Q_{0}}+\mu \frac{z^{2}}{y}.
    \end{cases}
\end{equation}
We are going to show the following Lemma:
\begin{lem}
\label{lem:appendix}
Let $Q_{0}>0$, $\varepsilon>0$, $c_{y}>0$, $C_{z}>0$ and
\begin{equation}
\begin{split}
\Omega(\varepsilon) := \Big\{&(y,z)\in (0,+\infty)\times\mathbb{R} \quad \Big| \quad \frac{gQ_{0}}{y^{2}}-y>\varepsilon,\; y\geq c_{y},\; |z|\leq C_{z} \Big\}.
\end{split}
\end{equation}
There exists $\mu^{*}>0$ such that for any $\mu\in(0,\mu^{*})$ and any $y_{0},z_{0}\in \Omega(2\varepsilon)$, there exists a unique solution $(y,z)\in C^{1}([0,L];\Omega(\varepsilon))$ to \eqref{eq:sys4} with initial condition $(y_{0},z_{0})$. Moreover there exists $C_{1}>0$ depending only on $\varepsilon$, $Q_{0}$, $c_{y}$ and the parameters of the system such that 
\begin{equation}
\label{eq:estimyz}
\begin{split}
|y(x)-y_{0}|&\leq \frac{4\mu}{\varepsilon}\left(1-e^{-\frac{\varepsilon x}{4\mu}}\right)|z_{0}| + C_{1}\frac{4\mu}{\varepsilon}\left(x-\frac{4\mu}{\varepsilon}(1-e^{-\frac{\varepsilon x}{4\mu}}))\right),\\
|z(x)|&\leq  |z_{0}| e^{-\frac{\varepsilon x}{4\mu}}+ C_{1}\frac{4\mu}{\varepsilon}(1-e^{-\frac{\varepsilon x}{4\mu}}),
\end{split}
\end{equation}
{and for any $x_{0}\in[0,L]$ such that $z(x_{0})=0$, $z_{x}(x_{0})>0$.}
\end{lem}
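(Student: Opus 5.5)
The plan is to treat \eqref{eq:sys4} as a regular ODE for fixed $\mu>0$, rewritten as
\[
z_x = -\frac{a(y)}{4\mu}\,z + F(y,z), \qquad a(y):=\frac{gQ_{0}}{y^{2}}-y,\qquad F(y,z):=\frac{3}{4}\frac{\tilde f(y)y}{Q_{0}}+\frac{z^{2}}{y}.
\]
We then run a bootstrap (continuation) argument inside $\Omega(\varepsilon)$. Cauchy--Lipschitz gives a unique local $C^{1}$ solution near $(y_{0},z_{0})$, since the right-hand side is smooth on $y>0$. Let $x^{*}\le L$ be the supremal point such that $(y,z)\in\Omega(\varepsilon)$ on $[0,x^{*})$. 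On that interval we have $a(y)>\varepsilon$, $y\ge c_{y}$ and $|z|\le C_{z}$. Moreover $\tilde f(y)$ is bounded uniformly in $\mu$ by $y\kappa/(\kappa Q_0)\cdot y$ (indeed $\tilde f\le y^{2}/Q_{0}$). Hence $|F|\le C_{1}$ with $C_{1}$ depending only on $Q_{0},c_{y},C_{z}$ and the parameters. Since $\Omega$ gives an upper bound for $y$ through $a(y)>\varepsilon$, $y$ is bounded above as well.

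The key estimate comes from Duhamel's formula with integrating factor $e^{\int_0^x a(y)/(4\mu)}$:
\[
z(x)=z_{0}e^{-\int_{0}^{x}\frac{a(y)}{4\mu}}+\int_{0}^{x}e^{-\int_{s}^{x}\frac{a(y)}{4\mu}}F(y,z)(s)\,ds .
\]
Using $a>\varepsilon$, this yields $|z(x)|\le |z_{0}|e^{-\varepsilon x/4\mu}+C_{1}\frac{4\mu}{\varepsilon}(1-e^{-\varepsilon x/4\mu})$. Integrating $y_x=z$ then gives the first line of \eqref{eq:estimyz} directly. Both right-hand sides are bounded by $|z_{0}|\cdot 4\mu/\varepsilon+C_{1}4\mu L/\varepsilon$ and by $|z_{0}|+4\mu C_{1}/\varepsilon$ respectively.

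The main obstacle is closing the bootstrap, i.e.\ showing the solution cannot reach $\partial\Omega(\varepsilon)$. For $y$: since $(y_0,z_0)\in\Omega(2\varepsilon)$ and $a$ is uniformly continuous (Lipschitz) on the relevant compact $y$-range, choosing $\mu^{*}$ small makes $|y-y_0|$ small enough that $a(y)>\varepsilon$ strictly and $y\ge c_y/2$. Here I would let $\Omega(2\varepsilon)$ carry the margin, and if needed apply the lemma with slightly shrunk constants, e.g.\ starting from $y_0\ge 2c_y$. For $z$: the bound $|z|\le|z_0|+4\mu C_1/\varepsilon$ exceeds $C_z$ only by $O(\mu)$, so I would require the initial data to satisfy $|z_{0}|\le C_{z}/2$ (in the application $z_0=0$), or else enlarge $C_z$ in the definition of $\Omega(\varepsilon)$ relative to $\Omega(2\varepsilon)$. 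In either case the estimates are strict on $[0,x^{*}]$, so $x^{*}=L$ by continuity, which gives existence and uniqueness on $[0,L]$. Some care is needed so that $C_{1}$ is fixed before $\mu^{*}$ and does not depend on $\mu$; the uniform bound on $\tilde f$ ensures this.

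Finally, if $z(x_{0})=0$, then \eqref{eq:sys4} gives $\mu z_{x}(x_{0})=\frac{3}{4}\mu\,\tilde f(y)y/Q_{0}>0$, because $y>0$ and $\tilde f(y)>0$. Hence $z_{x}(x_{0})>0$. This also shows that $z$ can only cross zero upward, which will later give $V^{*}_{x}\ge0$ when starting from $z_{0}=0$.
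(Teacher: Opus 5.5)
Your proposal is correct and follows essentially the same route as the paper: a maximal Cauchy--Lipschitz solution in $\Omega(\varepsilon)$, and Duhamel's formula with integrating factor $e^{-\frac{1}{4\mu}\int\Gamma}$ to get both estimates (using $\Gamma>\varepsilon$ and a $\mu$-uniform bound on the forcing, since $\tilde f(y)\le y^{2}/Q_{0}$). It then uses a continuation argument to reach $x=L$, and reads off $\mu z_x(x_0)=\frac34\mu\tilde f(y)y/Q_0>0$ from \eqref{eq:sys4}. Your insistence on a margin is genuinely needed for $y$: as literally stated the lemma fails for $y_0=c_y$, $z_0<0$, since $y$ immediately drops below $c_y$; the paper glosses over this, though its application with $y_0=V_0=2c_y$, $z_0=0$ does have the margin. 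It is not needed for $z$, because the Duhamel bound is a convex combination of $|z_0|$ and $4\mu C_1/\varepsilon$, so $|z|\le C_z$ (strictly for $x>0$) as soon as $4\mu C_1/\varepsilon<C_z$.
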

\begin{proof}
Let $Q_{0}>0$, $\varepsilon>0$, $c_{y}>0$, $C_{z}>0$ and let $\mu\in(0,\mu^{*})$ with $\mu^{*}$ to be chosen later on. {Note that on $\Omega(\varepsilon)$, the function $(y,z)\mapsto (z, z(y-gQ_{0}/y^{2})/4+3\mu\tilde{f}(y)y/(4Q_{0})+\mu z^{2}/y)$ is of class $C^{1}$ and hence locally Lipschitz. Thus from} the {(local)} Cauchy-Lipschitz theorem, 
there exists a unique maximal solution $(y, z)$ in $\Omega(\varepsilon)$ to \eqref{eq:sys4} defined 
on $[0,L^{*})$ with $L^{*}=+\infty$ or $L^{*}\in(0,+\infty)$ and in {the second} case either
\begin{equation}
\label{eq:boundslim}
\begin{split}
&\lim\limits_{x\rightarrow L^{*}}(gQ_{0}/y^{2}(x)-y(x)) =\varepsilon\text{ with }y(x)> 0\\
\text{ or }&\lim\limits_{x\rightarrow L^{*}}y(x) = c_{y}\\
\text{ or }&\lim\limits_{x\rightarrow L^{*}}|z(x)| = C_{z}
\end{split}
\end{equation}
Note that $gQ_{0}-y^{3}-\varepsilon y^{2}$ has a unique non-negative (and actually positive) root (because it is positive at $y=0$, negative when $y \to +\infty$ and its derivative is negative for $y > 0$) that we denote by $y_{1,\varepsilon}>0$, this means that 
\begin{equation}
\label{eq:defy1}
\lim\limits_{x\rightarrow L^{*}}(gQ_{0}/y^{2}(x)-y(x)) =\varepsilon \iff
   \lim\limits_{x\rightarrow L^{*}}y(x) = y_{1,\varepsilon}.
\end{equation}
In any cases, on $[0,L^{*})$, we can define 
\begin{equation}
\label{eq:defg}
{\Gamma(x)} =\frac{gQ_{0}}{y^{2}(x)}-y(x),
\end{equation}
and we have
\begin{equation}
    \mu z'(x) = -\frac{1}{4}{\Gamma(x)}z(x)+\mu\left(\frac{3}{4}\frac{\tilde{f}(y(x))y(x)}{Q_{0}}+\frac{z(x)^{2}}{y(x)}\right),
\end{equation}
hence, by Duhamel's formula,
\begin{equation}
\begin{split}
    z(x) &= z_{0} e^{-\frac{1}{4\mu}\int_{0}^{x}{\Gamma(s)}ds}+  \int_{0}^{x}e^{-\frac{1}{4\mu}\int_{s}^{x}{\Gamma(\tau)}d\tau}\left(\frac{3}{4}\frac{\tilde{f}(y(s))y(s)}{Q_{0}}+\frac{z(s)^{2}}{y(s)}\right) ds.
\end{split}
\end{equation}
Since $(y(s),z(s))\in \Omega(\varepsilon)$ for any $s\in [0,x]\subset[0,L^{*})$ and from \eqref{eq:defy1},
\begin{equation}
\label{eq:estimz}
\begin{split}
    |z(x)| &\leq |z_{0}| e^{-\frac{\varepsilon x}{4\mu}}+C_{1}\int_{0}^{x}e^{-\frac{\varepsilon (x-s)}{4\mu}} ds \\&= |z_{0}| e^{-\frac{\varepsilon x}{4\mu}}+ C_{1}\frac{4\mu}{\varepsilon}(1-e^{-\frac{\varepsilon x}{4\mu}}),
\end{split}
\end{equation}
where $C_{1} = 3Q_{0}^{-1}\max\limits_{[c_{y},y_{1,\varepsilon}]}|\tilde{f}(y)y|/4+ C_{z}^{2}c_{y}^{-1}$. 
Therefore, using the first equation of \eqref{eq:sys4},
\begin{equation}
\label{eq:estimy}
\begin{split}
|y(x) - y_{0}| &\leq \int_{0}^{x} |z_{0}| e^{-\frac{\varepsilon s}{4\mu}}+ C_{1}\frac{4\mu}{\varepsilon}(1-e^{-\frac{\varepsilon s}{4\mu}}) ds\\
&= \frac{4\mu}{\varepsilon}\left(1-e^{-\frac{\varepsilon x}{4\mu}}\right)|z_{0}|  + C_{1}\frac{4\mu}{\varepsilon}\left(x-\frac{4\mu}{\varepsilon}(1-e^{-\frac{\varepsilon x}{4\mu}}))\right).
\end{split}
\end{equation}
We see from \eqref{eq:estimz}--\eqref{eq:estimy} that we can choose $\mu^{*}$, depending on $C_{z}$, $c_{y}$ and $\varepsilon$ such that $L^{*}>L$ (otherwise we get a contradiction because we never reach \eqref{eq:boundslim}). {In fact, this characterizes $\mu^{*}$ and it can be computed as the limiting constant at which \eqref{eq:boundslim} is reached.} 
This shows the existence (and unicity) of the solution $(y,z)\in C^{1}([0,L];\Omega(\varepsilon))$. In addition the estimates \eqref{eq:estimz}--\eqref{eq:estimy} hold, {which is exactly \eqref{eq:estimyz}. Finally, assume that for $x_{0}\in[0,L]$, $z(x_{0})=0$, since $(y(x_{0}),z(x_{0}))\in \Omega(\varepsilon)$, $y(x_{0})\geq c_{y}>0$ and, from \eqref{eq:sys4} and the expression of $\tilde{f}$ given by \eqref{eq:deftildef} we deduce directly that $z_{x}(x_{0})>0$,} which ends the proof of Lemma \ref{lem:appendix}.
\end{proof}

Proposition \ref{prop:estimVx} follows from Lemma \ref{lem:appendix}. Indeed, for any steady-state $(H_{0},V_{0})\in (0,+\infty)^{2}$ such that
\begin{equation}
gH_{0}-V^{2}_{0}>0,
\end{equation}
we can denote $Q_{0} = H_{0}V_{0}$ which is a constant, $y_{0}:=V_{0}$, $c_{y}:=V_{0}/2$, and $z_{0} = 0$ and there exists $\varepsilon>0$ such that $gQ_{0}/(V_{0})^{2}-V_{0}>2\varepsilon$. From Lemma \ref{lem:appendix} and using \eqref{eq:sys1HV}--\eqref{eq:sys4}, there exists $\mu^{*}>0$ such that for any $\mu\in(0,\mu^{*})$ there exists a unique solution $(H^{*},V^{*})\in C^{2}([0,L];(0,+\infty)^{2})$ 
to \eqref{eq:sys1HV} that satisfies in addition,
\begin{equation}
    gH^{*}(x)-(V^{*}(x))^{2}>\varepsilon V^{*}(x)>\varepsilon V^{*}(0)/2,\;\forall x\in[0,L],
\end{equation}
and 
{
\begin{equation}
|V^{*}(x)-V_{0}|\leq C_{1}L\frac{4\mu}{\varepsilon}
\end{equation}
and}
\begin{equation}
\label{eq:vx_bounded}
    |V_{x}^{*}(x)|\leq 
    |V_{x}^{*}(0)|e^{-\varepsilon x/4\mu} +
    \mu \frac{4 C_{1}}{\varepsilon}.
\end{equation}
Since 
$V_{x}^{*}(0)=0$, we get that $\|V_{x}^{*}\|_{L^{\infty}}\leq \mu 4 C_{1}/\varepsilon$. From \eqref{eq:sys2QV} we deduce that
\begin{equation}
|V_{xx}^{*}(x)|< C_{2},
\end{equation}
where $C_{2}$ is a constant that only depends on $H_{0}$ and $V_{0}$. 
{Moreover, we can show that $V_{x}^{*} \geq 0$ for all $x \in [0,L]$. Indeed, $V_{x}^{*}(0)=0$ and from Lemma \ref{lem:appendix}, $V_{xx}^{*}(0)>0$ which means that there exists $l_{1}>0$ such that $V_{x}^{*}>0$ on $[0,l_{1}]$. Suppose by contradiction that there exists an $x_1 \in [l_{1},L]$ such that $V_{x}^{*}(x_1) < 0$. Since $V^{*}$ is continuous, there must be a point $x_2 \in (l_{1}, x_1)$ such that $V_{x}^{*}(x_2) = 0$ and $V_{xx}^{*}(x_{2}) < 0$, but this is in contradiction with Lemma \ref{lem:appendix}.}

This shows the first part of Proposition \ref{prop:estimVx} and it only remains to show \eqref{eq:estimVmu}. To do so, we can perform as in Lemma \ref{lem:appendix}.
Indeed, let $\hat{z}= 3\mu \frac{\tilde{f}(y)y}{Q_{0}\left(g \frac{Q_{0}}{y^{2}}-y\right)}$. From Lemma \ref{lem:appendix} we know that $(y,z)$ with $y=V^{*}$, $z=V_{x}^{*}$ exists in $C^{1}([0,L];\Omega(\varepsilon))$ for our choice of $\mu$ and \eqref{eq:estimyz} holds.
Observe that, from \eqref{eq:defg},
\begin{equation}
\frac{3}{4}\frac{\tilde{f}(y)y}{Q_{0}} = \frac{1}{4\mu}{\Gamma(x)}\hat{z}.
\end{equation}
Thus, we have from \eqref{eq:sys4}
\begin{equation}
    (z-\hat{z})' = -\frac{1}{4\mu}(z-\hat{z}){\Gamma(x)}+\frac{z^{2}}{y}+z\mu\partial_{y}\left(\frac{3\tilde{f}(y)y}{Q_{0}\left(g \frac{Q_{0}}{y^{2}}-y\right)}\right),
\end{equation}
thus, by Duhamel's formula,
\begin{equation}
\begin{split}
\label{eq:estimhat}
&z(x)-\hat{z}(x) = (z_{0}-\hat{z}(0))e^{-\frac{1}{4\mu}\int_{0}^{x}{\Gamma(s)}d{s}}+\int_{0}^{x} e^{-\frac{1}{4\mu}\int_{s}^{x}{\Gamma(\tau)}d\tau}\\& 
\times\left[\frac{z(s)^{2}}{y(s)}+z(s)\partial_{y}\left(\frac{3\mu\tilde{f}(y(s))y(s)}{Q_{0}\left(g \frac{Q_{0}}{y(s)^{2}}-y(s)\right)}\right)\right]ds.
\end{split}
\end{equation}
As a consequence, from \eqref{eq:estimyz} with $z_{0}=0$, performing as in \eqref{eq:estimz} {and also using \eqref{eq:vx_bounded},}
\begin{equation}
    |z(x)-\hat{z}(x)|\leq |\hat{z}(0)|e^{-\frac{\varepsilon x}{4\mu}} + \frac{C}{\varepsilon}\mu^{3}.
\end{equation}
From the definition of $\hat{z}$, we have
\begin{equation}
    |z(x)-\hat{z}(x)|\leq \frac{C}{\varepsilon}\mu e^{-\frac{\varepsilon x}{4\mu}} + \frac{C}{\varepsilon}\mu^{3},
\end{equation}
where $C$ is a constant that may change between lines but only depends on $y_{0}$ and the parameters of the system and not on $\mu$.
As a consequence, using \eqref{eq:estimhat}
\begin{equation}
z_{x} = \hat{z}_{x} +\frac{1}{4\mu}{\Gamma(x)}\hat{z}(0)e^{-\frac{1}{4\mu}\int_{0}^{x}{\Gamma(s)}d{s}}+O(\mu^{2}).
\end{equation}

Translating $z$ and $y$ as $V_{x}$ and $V$ respectively gives the estimates \eqref{eq:estimVmu}. This ends the proof of Proposition \ref{prop:estimVx}.

\section{Well-posedness of the linear system}
\label{app:well_posedness_linear}

We consider the linear operator
\begin{equation}
\mathcal{A} = -(A\partial_{xx}+ B(x)\partial_{x}+C(x) \text{Id})
\end{equation}
where $\text{Id}$ denotes the identity operator. $\mathcal{A}$ is an operator in the space $L^2([0,L]) \times L^2([0,L])$ with domain
\begin{equation}
\begin{split}
D(\mathcal{A}) &= \{f = (h,v)^{T}\in L^{2} \;|\; \mathcal{A}f\in L^{2},
 v(0) = -b_{0}h(0),\;v_{x}(0)=0,
 v(L) = b_{1}h(L)+\mu c_{1}v_{x}(L)\}.
\end{split}
\end{equation}
We claim the following:
\begin{itemize}
\item[(i)] $D(\mathcal{A})$ is dense in $L^{2}$,
\item[(ii)] $\mathcal{A}$ is dissipative, \emph{i.e.} $\text{Re} \langle u, \mathcal{A}u \rangle \leq 0$ for every $u\in D(\mathcal{A})$,
\item[(iii)] There exists $\lambda_{0}>0$ such that $\mathcal{A}-\lambda_{0} \text{Id}$ is surjective.
\end{itemize}

If these three conditions (i)-(iii) are satisfied, then by the Lumer-Phillips theorem~\cite{lumer1961dissipative,renardy2004introduction}, $\mathcal{A}$ generates a $C^0$-semigroup. Then, the initial value problem
\begin{equation}
    \begin{cases}
        \displaystyle \frac{d}{dt} \begin{pmatrix}h \\ v\end{pmatrix}(t) = \mathcal{A} \begin{pmatrix}h \\ v\end{pmatrix}(t) \qquad \text{for } t \geq 0, \\
        v(0) = -b_{0}h(0),\;\; 
        v(L) = b_{1}h(L)+\mu c_{1}v_{x}(L), \\ v_{x}(0)=0
    \end{cases}
\end{equation}
is well-posed (see \cite{engel2000oneparameter} Corollary 6.9), thus our linear Saint-Venant with viscosity system \eqref{eq:linear_system} is well-posed.

We now proceed to proving (i)-(iii).

(i) follows directly from the density of $C^{2}$ with compact support in $L^{2}$ since any function that is $C^{2}$ with compact support in $(0,L)$ belongs to $D(\mathcal{A})$. 

(ii) follows from the fact that we derived a Lyapunov function in the linear case. Let us consider the associated inner product $\langle u, v \rangle = \int_{0}^{L} u(x)^T Q(x) v(x) \, dx$ for $u,v \in L^2([0,L])^2$, with $Q$ defined in \eqref{eq:defQ}. Consider $u = (h, v)^T \in D(\mathcal{A})$, since $Q = Q^T$ we have 
\begin{equation}
    \text{Re} \langle u, \mathcal{A}u \rangle = \langle u, u_t \rangle = \int_0^L u_t^T Q u \, dx = \frac{1}{2} \dot{\bf{{W}}} \leq 0
\end{equation} where the last inequality directly follows from Proposition~\ref{prop:lyap_3_props}.

Let us now look at (iii). This is equivalent to show that there exists $\lambda_{0}>0$ such that for any $f=(f_{1},f_{2})^{T}\in L^{2}$ there exists $u=(h,v)^{T}\in D(\mathcal{A})$ such that
\begin{equation}
\mathcal{A}u - \lambda_0u = f.
\end{equation}
This is equivalent to saying that the following Cauchy problem admits a solution on $x \in [0, L]$:
\begin{equation}
\begin{cases}
-(A u_{xx} + B(x) u_x) - (C(x) + \lambda_0 I) u = f(x) \\
v(0) = -b_{0}h(0),\;\; v(L) = b_{1}h(L)+\mu c_{1}v_{x}(L) \\ v_{x}(0)=0
\end{cases}
\end{equation}
Expanding the ODE using the definitions of $A$~\eqref{eq:linear_A}, $B$~\eqref{eq:linear_B} and $C$~\eqref{eq:linear_C}, we get
\begin{equation}
\label{eq:wp1}
\begin{split}
&h_{x}V^{*}+v_{x}H^{*}+H_{x}^{*}v+V_{x}^{*}h+\lambda_{0} h = -f_{1}\\
&\left(V^{*}-4\mu\frac{H_x^{*}}{H^{*}}\right)v_{x}+\left(g-4\mu \frac{V_x^{*}}{H^{*}}\right)h_{x}+V^{*}_{x}v\\&\;\;+S_{1}(x)h+S_{2}(x)v +\lambda_{0}v-4\mu v_{xx} = -f_{2}
\end{split}
\end{equation}
where $S_1(x)$ and $S_2(x)$ are the corresponding terms in $C$. Setting $v_{2} = v_{x}$, we obtain
\begin{equation}
\begin{split}
h_{x}V^{*}+v_{2}H^{*}+H_{x}^{*}v+V_{x}^{*}h+\lambda_{0} h &= -f_{1}\\
\left(V^{*}-4\mu\frac{H_x^{*}}{H^{*}}\right)v_{2}+\left(g-4\mu \frac{V_x^{*}}{H^{*}}\right)h_{x}+V^{*}_{x}v +S_{1}(x)h+S_{2}(x)v +\lambda_{0}v-4\mu v_{2x} &= -f_{2}.
\end{split}
\end{equation}
{The total system} is of the form
\begin{equation}
\label{eq:wp2}
M_{1}\begin{pmatrix}h\\v_{2}\\v\end{pmatrix}_{x} + M_{2}\begin{pmatrix}h\\v_{2}\\v\end{pmatrix}= {-} \begin{pmatrix}f_{1}\\f_{2}\\0\end{pmatrix}
\end{equation}
with
\begin{equation}
\begin{split}
M_{1}=& \begin{pmatrix}
V^{*} & 0 & 0\\
g-4\mu\frac{V_{x}^{*}}{H^{*}} & -4\mu & 0\\
0& 0 & 1
    \end{pmatrix},\\
    M_{2} =&\begin{pmatrix}
    V_{x}^{*}+\lambda_{0} & H^{*} & H_{x}^{*}\\
    S_{1} & V^{*}-4\mu \frac{H_{x}^{*}}{H^{*}} & S_{2}+\lambda_{0}+V_{x}^{*}\\
    0 & -1 & 0
    \end{pmatrix}
    \end{split}
\end{equation}
with
 $v(0) = -b_{0}h(0),\;v(L) = b_{1}h(L)+\mu c_{1}v_{2}(L),\; v_{2}(0) = 0.$
Let us diagonalize $M_{1}$, with $P^{-1}M_{1}P=\Lambda(x)$, and let $z = P^{-1}(h,v_{2},v)^{T}$ and ${\psi} = P^{-1}(f_{1},f_{2},0)^{T}$. Then \eqref{eq:wp2} is equivalent to
\begin{equation}
\Lambda \partial_{x}z + P^{-1}M_{2}P z - \Lambda (\partial_{x}P^{-1})Pz = {-} {\psi},
\end{equation}
where
\begin{equation}
\begin{split}
    P = \begin{pmatrix}
        0 & \frac{1}{\Delta(x)} & 0 \\ 0 & 1 & 1 \\ 1 & 0 & 0
    \end{pmatrix}, \;
    P^{-1} = \begin{pmatrix}
        0 & 0 & 1 \\ \Delta(x) & 0 & 0 \\ -\Delta(x) & {1} & 0
    \end{pmatrix},\\
    \Lambda = \begin{pmatrix}
        1 & 0 & 0 \\ 0 & V^* & 0 \\ 0 & 0 & -4 \mu
    \end{pmatrix}, \;
    z = \begin{pmatrix}
        v \\ h \Delta(x) \\ -h \Delta(x) + v_2
    \end{pmatrix}
    \end{split}
\end{equation}
with \begin{equation}\Delta(x) = \frac{1}{4\mu + V^*}\left( g-4\mu\frac{V_x^*}{H^*} \right).\end{equation}
We set
\begin{equation}
    M_{3} = P^{-1}M_{2}P-\Lambda(\partial_{x}P^{-1})P
\end{equation}
yielding the system
\begin{equation}
    \Lambda \partial_x z + M_3 z = {-} {\psi}.
\end{equation}

Let us look at the new operator:
\begin{equation}
    \mathcal{A}_{0} = -\Lambda \partial_{x} -M_{3} \text{Id},
\end{equation}
defined on the domain
    \begin{equation} 
    \begin{split}
    D(\mathcal{A}_{0}) &= \{z \in L^{2}, \; (h,v_{2},v)^{T} = Pz \;|\;v(0) = -b_{0}h(0),  v(L) = b_{1}h(L)+\mu c_{1}v_{2}(L),\; v_{2}(0) = 0\}.
    \end{split}
    \end{equation}

    Note that this can be expressed with the more explicit boundary conditions

\begin{equation}
\label{eq:bound1z}
    z_{3}(L) = D\begin{pmatrix}z_{1}(L) \\z_{2}(L)\end{pmatrix}, \qquad
    \begin{pmatrix}z_{1}(0) \\z_{2}(0)\end{pmatrix} = Ez_{3}(0).
\end{equation}
with
\begin{equation}
\label{eq:bound2z}
    D = \displaystyle \frac{\Delta(L)}{b_1 -\mu c_1 \Delta(L)} \begin{pmatrix}
        1 \\ - \mu c_1
    \end{pmatrix}^T, \; E = \begin{pmatrix} \displaystyle
        - \frac{b_0}{\Delta(0)} \\ -1
    \end{pmatrix}
\end{equation}
such that $D(\mathcal{A}_{0})$ can be expressed as
\begin{equation}
    D(\mathcal{A}_{0})=\{z\in L^{2}((0,L);\mathbb{R}^{3})\;|\; z \text{ satisfies }\eqref{eq:bound1z}\}.
\end{equation}
If we can show that there exists $\lambda_{0}>0$ such that $\mathcal{A}_{0}$ is surjective, then a fortiori there exists $z\in D(\mathcal{A}_{0})$ such that $\mathcal{A}_{0}z = {{\psi}}$ and therefore, using again the change of variable, there exists $(h,v_{2},v)$ such that \eqref{eq:wp2} holds and the proof of (iii) is done. To show this, note that this operator satisfies the assumption of \cite{lichtner2008spectral}, (with $F=G=0$ in the notations on \cite{lichtner2008spectral}, {recall that $V^{*}>0$ on $[0,L]$}).
Hence, from \cite{lichtner2008spectral} (Lemma 2.2, where our $\mathcal{A}_0$ is their $A$), $\mathcal{A}_{0}$ generates a $C^{0}$ semigroup on $L^{2}$ and therefore $(\lambda Id-\mathcal{A}_{0})^{-1}$ is well-defined on $\rho(\mathcal{A}_{0}) = \mathbb{C}\setminus\{\sigma_{p}(\mathcal{A}_{0})\}$, where $\sigma_{p}(\mathcal{A}_{0})$ is the point spectrum of $\mathcal{A}_{0}$, that is, the values $\lambda$ such that there exists $z\neq 0$ with
\begin{equation}
    \mathcal{A}_{0}z = \lambda z.
\end{equation}
If we can show that $0\in \rho(\mathcal{A}_{0})$, then $\mathcal{A}_{0}^{-1}$ is well defined on the image of $D(\mathcal{A})$ by $\mathcal{A}_{0}$, but since $\mathcal{A}_{0}$ is closed~\cite{lichtner2008spectral}, this is the whole $L^{2}$ and thus $A_{0}$ is surjective. It remains to show that there exists $\lambda_{0}>0$ such that 0 is not an eigenvalue of $\mathcal{A}_{0}$. Assume that 0 is an eigenvalue, using the change of variable again, there exists $u = (h,v_{2},v)^{T}\neq 0$ such that
\begin{equation}
\label{eq:contra1}
    M_{1} u_x + M_{2} u = 0,
\end{equation}
with $u$ satisfying the boundary conditions (ie. $u \in D(\mathcal{A}_0)$). Since $v_{2}=v_{x}$ from \eqref{eq:contra1}, it means that $(h,v)^{T}\neq 0$ is a solution to
\begin{equation}
\mathcal{A}(h,v)^{T} = \lambda_{0}(h,v)^{T},
\end{equation}
thus
\begin{equation}
\langle \mathcal{A}(h,v)^{T}, (h,v)^{T}\rangle= \lambda_{0}\|(h,v)^{T}\|^{2}_{V}>0,
\end{equation}
which is in contradiction with the dissipativity of the operator.
\section*{Acknowledgements}
The authors wish to thank the ANR-Tremplin StarPDE ANR-24-ERCS-0010. The authors also wish to thank Alexandre Ern for fruitful conversations.

\bibliographystyle{abbrv}
\bibliography{citations}

@book{bastin2016stability,
  title = {Stability and {{Boundary Stabilization}} of 1-{{D Hyperbolic Systems}}},
  author = {Bastin, Georges and Coron, Jean-Michel},
  year = {2016},
  series = {Progress in {{Nonlinear Differential Equations}} and {{Their Applications}}},
  volume = {88},
  publisher = {{Springer International Publishing}},
  address = {{Cham}},
  doi = {10.1007/978-3-319-32062-5},
  isbn = {978-3-319-32060-1 978-3-319-32062-5}
}

@article{hindmarsh1983odepack,
  title={ODEPACK, a systemized collection of ODE solvers},
  author={Hindmarsh, Alan C},
  journal={Scientific computing},
  year={1983},
  publisher={North-Holland}
}

@article{lichtner2008spectral,
  title = {Spectral Mapping Theorem for Linear Hyperbolic Systems},
  author = {Lichtner, Mark},
  year = {2008},
  month = feb,
  journal = {Proceedings of the American Mathematical Society},
  volume = {136},
  number = {6},
  pages = {2091--2101},
  issn = {0002-9939, 1088-6826},
  doi = {10.1090/S0002-9939-08-09181-8}
}

@article{lumer1961dissipative,
  title = {Dissipative Operators in a {{Banach}} Space},
  author = {Lumer, Gunter and Phillips, R. S.},
  year = {1961},
  month = jun,
  journal = {Pacific Journal of Mathematics},
  volume = {11},
  number = {2},
  pages = {679--698},
  issn = {0030-8730, 0030-8730},
  doi = {10.2140/pjm.1961.11.679}
}

@book{renardy2004introduction,
  title = {An Introduction to Partial Differential Equations},
  author = {Renardy, Michael and Rogers, Robert C.},
  year = {2004},
  series = {Texts in Applied Mathematics},
  edition = {2nd ed},
  number = {13},
  publisher = {{Springer}},
  address = {{New York}},
  isbn = {978-0-387-00444-0},
  lccn = {QA374 .R4244 2004}
}

@book{engel2000oneparameter,
  title = {One-{{Parameter Semigroups}} for {{Linear Evolution Equations}}},
  author = {Engel, Klaus-Jochen and Nagel, Rainer},
  year = {2000},
  series = {Graduate {{Texts}} in {{Mathematics}}},
  volume = {194},
  publisher = {{Springer-Verlag}},
  address = {{New York}},
  doi = {10.1007/b97696},
  isbn = {978-0-387-98463-6}
}

@article{bastin2011boundary,
  title = {On Boundary Feedback Stabilization of Non-Uniform Linear Hyperbolic Systems over a Bounded Interval},
  author = {Bastin, Georges and Coron, Jean-Michel},
  year = {2011},
  month = nov,
  journal = {Systems \& Control Letters},
  volume = {60},
  number = {11},
  pages = {900--906},
  issn = {01676911},
  doi = {10.1016/j.sysconle.2011.07.008}
}

@article{de1871theorie,
  title={Th{\'e}orie du mouvement non-permanent des eaux, avec application aux crues des rivi{\`e}res et {\`a} l’introduction des mar{\'e}es dans leur lit},
  author={Barr{\'e} de Saint-Venant, Adh{\'e}mar Jean-Claude},
  journal={Comptes Rendus de l'Acad{\'e}mie des Sciences},
  volume={73},
  number={147-154},
  pages={237--240},
  year={1871}
}

@INPROCEEDINGS{coron1999lyapunov,
  author={Coron, J. M. and d'Andréa-Novel, B. and Bastin, G.},
  booktitle={1999 European Control Conference (ECC)}, 
  title={A Lyapunov approach to control irrigation canals modeled by saint-venant equations}, 
  year={1999},
  volume={},
  number={},
  pages={3178-3183},
  doi={10.23919/ECC.1999.7099816}}

@ARTICLE{coron2007strict,
  author={Coron, Jean-Michel and d'Andrea-Novel, Brigitte and Bastin, Georges},
  journal={IEEE Transactions on Automatic Control}, 
  title={A Strict Lyapunov Function for Boundary Control of Hyperbolic Systems of Conservation Laws}, 
  year={2007},
  volume={52},
  number={1},
  pages={2-11},
  doi={10.1109/TAC.2006.887903}}

@article{coron2008dissipative,
  TITLE = {{Dissipative boundary conditions for one-dimensional nonlinear hyperbolic systems}},
  AUTHOR = {Coron, Jean-Michel and Bastin, Georges and d'Andr{\'e}a-Novel, Brigitte},
  URL = {https://minesparis-psl.hal.science/hal-00923596},
  JOURNAL = {{SIAM Journal on Control and Optimization}},
  PUBLISHER = {{Society for Industrial and Applied Mathematics}},
  VOLUME = {47},
  NUMBER = {3},
  PAGES = {1460-1498},
  YEAR = {2008},
  HAL_ID = {hal-00923596},
  HAL_VERSION = {v1},
}

@article{bastin2017quadratic,
title = {A quadratic Lyapunov function for hyperbolic density–velocity systems with nonuniform steady states},
journal = {Systems \& Control Letters},
volume = {104},
pages = {66-71},
year = {2017},
issn = {0167-6911},
doi = {https://doi.org/10.1016/j.sysconle.2017.03.013},
url = {https://www.sciencedirect.com/science/article/pii/S016769111730066X},
author = {Georges Bastin and Jean-Michel Coron}
}

@article{hayat2019quadratic,
  TITLE = {{A quadratic Lyapunov function for Saint-Venant equations with arbitrary friction and space-varying slope}},
  AUTHOR = {Hayat, Amaury and Shang, Peipei},
  URL = {https://hal.science/hal-01704710},
  JOURNAL = {{Automatica}},
  PUBLISHER = {{Elsevier}},
  VOLUME = {100},
  PAGES = {52--60},
  YEAR = {2019},
  HAL_ID = {hal-01704710},
  HAL_VERSION = {v1},
}

@article{hayat2021exponential,
  TITLE = {{Exponential stability of density-velocity systems with boundary conditions and source term for the $H^2$ norm}},
  AUTHOR = {Hayat, Amaury and Shang, Peipei},
  URL = {https://hal.science/hal-02190778},
  JOURNAL = {{Journal de Math{\'e}matiques Pures et Appliqu{\'e}es}},
  PUBLISHER = {{Elsevier}},
  VOLUME = {153},
  PAGES = {187-212},
  YEAR = {2021},
  DOI = {10.1016/j.matpur.2021.07.001},
  HAL_ID = {hal-02190778},
  HAL_VERSION = {v2},
}

@techreport{gerbeau2000derivation,
  TITLE = {{Derivation of Viscous Saint-Venant System for Laminar Shallow Water; Numerical Validation}},
  AUTHOR = {Gerbeau, Jean-Fr{\'e}d{\'e}ric and Perthame, Beno{\^i}t},
  URL = {https://inria.hal.science/inria-00072549},
  NOTE = {Projet M3N},
  TYPE = {Research Report},
  NUMBER = {RR-4084},
  INSTITUTION = {{INRIA}},
  YEAR = {2000},
  HAL_ID = {inria-00072549},
  HAL_VERSION = {v1},
}

@article{prieur2018boundary,
title = {Boundary feedback control of linear hyperbolic systems: Application to the Saint-Venant–Exner equations},
journal = {Automatica},
volume = {89},
pages = {44-51},
year = {2018},
issn = {0005-1098},
doi = {https://doi.org/10.1016/j.automatica.2017.11.028},
url = {https://www.sciencedirect.com/science/article/pii/S0005109817305708},
author = {Christophe Prieur and Joseph J. Winkin}
}

@misc{li2009exact,
      title={Exact boundary controllability and observability for first order quasilinear hyperbolic systems with a kind of nonlocal boundary conditions}, 
      author={Tatsien Li and Bopeng Rao and Zhiqiang Wang},
      year={2009},
      eprint={0908.1302},
      archivePrefix={arXiv},
      primaryClass={math.OC}
}

@article{halleux2003boundary, title={Boundary feedback control in networks of open channels}, author={Jonathan de Halleux and Christophe Prieur and Jean-Michel Coron and Brigitte d'Andr{\'e}a-Novel and Georges Bastin}, journal={Autom.}, year={2003}, volume={39}, pages={1365-1376}, url={https://api.semanticscholar.org/CorpusID:14252473} }

@inproceedings{gugat2009global,
  title={Global boundary controllability of the Saint-Venant system for sloped canals with friction},
  author={Gugat, Martin and Leugering, G{\"u}nter},
  booktitle={Annales de l'IHP Analyse non lin{\'e}aire},
  volume={26},
  number={1},
  pages={257--270},
  year={2009}
}

@article{gugat2003global,
title = {Global boundary controllability of the de St. Venant equations between steady states},
journal = {Annales de l'Institut Henri Poincaré C, Analyse non linéaire},
volume = {20},
number = {1},
pages = {1-11},
year = {2003},
issn = {0294-1449},
doi = {https://doi.org/10.1016/S0294-1449(02)00004-5},
url = {https://www.sciencedirect.com/science/article/pii/S0294144902000045},
author = {M. Gugat and G. Leugering}
}

@article{leugering2002modelling,
author = {Leugering, Guenter and Schmidt, J. P. Georg},
title = {On the Modelling and Stabilization of Flows in Networks of Open Canals},
journal = {SIAM Journal on Control and Optimization},
volume = {41},
number = {1},
pages = {164-180},
year = {2002},
doi = {10.1137/S0363012900375664},
URL = {    
        https://doi.org/10.1137/S0363012900375664
},
eprint = { 
        https://doi.org/10.1137/S0363012900375664
}
}

@article{gu2011exact,
author = {Gu, Qilong and Li, Tatsien},
title = {Exact Boundary Controllability for Quasilinear Hyperbolic Systems on a Tree-Like Network and Its Applications},
journal = {SIAM Journal on Control and Optimization},
volume = {49},
number = {4},
pages = {1404-1421},
year = {2011},
doi = {10.1137/080739902},
URL = {   
        https://doi.org/10.1137/080739902
},
eprint = { 
        https://doi.org/10.1137/080739902
}
}

@article{hayat2019boundary,
  title = {On Boundary Stability of Inhomogeneous 2 {\texttimes} 2 1-{{D}} Hyperbolic Systems for the {{{\emph{C}}}}{\textsuperscript{1}} Norm},
  author = {Hayat, Amaury},
  year = {2019},
  journal = {ESAIM: Control, Optimisation and Calculus of Variations},
  volume = {25},
  pages = {82},
  publisher = {EDP Sciences},
  issn = {1292-8119, 1262-3377},
  doi = {10.1051/cocv/2018059},
  copyright = {https://www.edpsciences.org/en/authors/copyright-and-licensing}
}

@article{hayat2023pi,
  title={PI control for the cascade channels modeled by general Saint-Venant equations},
  author={Hayat, Amaury and Hu, Yating and Shang, Peipei},
  journal={IEEE Transactions on Automatic Control},
  year={2023},
  publisher={IEEE}
}

@inproceedings{mascia2005asymptotic,
  TITLE = {{Asymptotic stability of steady-states for Saint-Venant equations with real viscosity}},
  AUTHOR = {Mascia, Corrado and Rousset, Fr{\'e}d{\'e}ric},
  URL = {https://hal.science/hal-00458162},
  BOOKTITLE = {{Analysis and simulation of fluid dynamics}},
  ADDRESS = {Lille, France},
  PUBLISHER = {{Birkh{\"a}user}},
  SERIES = {Advances in mathematical fluid mechanics},
  PAGES = {155-162},
  YEAR = {2005},
  HAL_ID = {hal-00458162},
  HAL_VERSION = {v1},
}

@misc{hayat2018exponentialstabilitygeneral1d,
      title={Exponential stability of general 1-D quasilinear systems with source terms for the $C^1$ norm under boundary conditions}, 
      author={Amaury Hayat},
      year={2018},
      eprint={1801.02353},
      archivePrefix={arXiv},
      primaryClass={math.AP},
      url={https://arxiv.org/abs/1801.02353}, 
}

@article{dick2011strict,
  title={A strict $H^1$-Lyapunov function and feedback stabilization for the isothermal Euler equations with friction},
  author={Dick, Markus and Gugat, Martin and Leugering, G{\"u}nter},
  journal={Numerical Algebra, Control and Optimization},
  volume={1},
  number={2},
  pages={225--244},
  year={2011},
  publisher={Numerical Algebra, Control and Optimization}
}

@article{GREENBERG198466,
title = {The effect of boundary damping for the quasilinear wave equation},
journal = {Journal of Differential Equations},
volume = {52},
number = {1},
pages = {66-75},
year = {1984},
issn = {0022-0396},
doi = {https://doi.org/10.1016/0022-0396(84)90135-9},
url = {https://www.sciencedirect.com/science/article/pii/0022039684901359},
author = {Greenberg, J.M and Li, Ta Tsien}
}

@article {Qin,
    AUTHOR = {Qin, Tie Hu},
     TITLE = {Global smooth solutions of dissipative boundary value problems
              for first order quasilinear hyperbolic systems},
      NOTE = {A Chinese summary appears in Chinese Ann. Math. Ser. A
              {{\bf{6}}} (1985), no. 4, 514},
   JOURNAL = {Chinese Ann. Math. Ser. B},
  FJOURNAL = {Chinese Annals of Mathematics. Series B. Shuxue Niankan. B Ji},
    VOLUME = {6},
      YEAR = {1985},
    NUMBER = {3},
     PAGES = {289--298},
      ISSN = {0252-9599},
   MRCLASS = {35L60},
  MRNUMBER = {842971},
MRREVIEWER = {R. G. Airapetyan},
}

@book{doi:10.1137/1.9780898718607,
author = {Krstic, Miroslav and Smyshlyaev, Andrey},
title = {Boundary Control of PDEs},
publisher = {Society for Industrial and Applied Mathematics},
year = {2008},
doi = {10.1137/1.9780898718607},
address = {Philadelphia, PA},
edition   = {},
URL = {https://epubs.siam.org/doi/abs/10.1137/1.9780898718607},
eprint = {https://epubs.siam.org/doi/pdf/10.1137/1.9780898718607}
}

@article{KRSTIC2008750,
title = {Backstepping boundary control for first-order hyperbolic PDEs and application to systems with actuator and sensor delays},
journal = {Systems \& Control Letters},
volume = {57},
number = {9},
pages = {750-758},
year = {2008},
issn = {0167-6911},
doi = {https://doi.org/10.1016/j.sysconle.2008.02.005},
url = {https://www.sciencedirect.com/science/article/pii/S0167691108000352},
author = {Miroslav Krstic and Andrey Smyshlyaev}
}

@INPROCEEDINGS{6160338,
  author={Vazquez, Rafael and Krstic, Miroslav and Coron, Jean-Michel},
  booktitle={2011 50th IEEE Conference on Decision and Control and European Control Conference}, 
  title={Backstepping boundary stabilization and state estimation of a 2 × 2 linear hyperbolic system}, 
  year={2011},
  volume={},
  number={},
  pages={4937-4942},
  doi={10.1109/CDC.2011.6160338}}

@article{Meglio2013StabilizationOA,
  title={Stabilization of a System of  \$n+1\$ Coupled First-Order Hyperbolic Linear PDEs With a Single Boundary Input},
  author={Florent Di Meglio and Rafael V{\'a}zquez and Miroslav Krsti{\'c}},
  journal={IEEE Transactions on Automatic Control},
  year={2013},
  volume={58},
  pages={3097-3111},
  url={https://api.semanticscholar.org/CorpusID:13292262}
}

@INPROCEEDINGS{7402381,
  author={Diagne, Ababacar and Diagne, Mamadou and Tang, Shuxia and Krstic, Miroslav},
  booktitle={2015 54th IEEE Conference on Decision and Control (CDC)}, 
  title={Backstepping stabilization of the linearized Saint-Venant-Exner Model: Part I - state feedback}, 
  year={2015},
  volume={},
  number={},
  pages={1242-1247},
  doi={10.1109/CDC.2015.7402381}}

@article{DIAGNE2016130,
title = {State Feedback Stabilization of the Linearized Bilayer Saint-Venant Model},
journal = {IFAC-PapersOnLine},
volume = {49},
number = {8},
pages = {130-135},
year = {2016},
note = {2nd IFAC Workshop on Control of Systems Governed by Partial Differential Equations CPDE 2016},
issn = {2405-8963},
doi = {https://doi.org/10.1016/j.ifacol.2016.07.431},
url = {https://www.sciencedirect.com/science/article/pii/S2405896316306395},
author = {Ababacar Diagne and Shuxia Tang and Mamadou Diagne and Miroslav Krstic}
}

@inproceedings{dimeglio:hal-01499940,
  TITLE = {{A backstepping boundary observer for a class of linear first-order hyperbolic systems}},
  AUTHOR = {Di Meglio, Florent and Krstic, Miroslav and Vazquez, Rafael},
  URL = {https://minesparis-psl.hal.science/hal-01499940},
  BOOKTITLE = {{2013 European Control Conference (ECC 2013)}},
  ADDRESS = {Z{\"u}rich, Switzerland},
  SERIES = {Proceedings of the 2013 European Control Conference (ECC 2013)},
  PAGES = {1597 - 1602},
  YEAR = {2013},
  MONTH = Jul,
  HAL_ID = {hal-01499940},
  HAL_VERSION = {v1},
}

@article{gugat2012h,
  title={H$^2$-Stabilization of the Isothermal Euler Equations: a Lyapunov function approach},
  author={Gugat, Martin and Leugering, G{\"u}nter and Tamasoiu, Simona and Wang, Ke},
  journal={Chinese Annals of Mathematics, Series B},
  volume={33},
  number={4},
  pages={479--500},
  year={2012},
  publisher={Springer}
}

@article{bastin2023diffusion,
  title={Diffusion and robustness of boundary feedback stabilization of hyperbolic systems},
  author={Bastin, Georges and Coron, Jean-Michel and Hayat, Amaury},
  journal={Mathematics of Control, Signals, and Systems},
  volume={35},
  number={1},
  pages={159--185},
  year={2023},
  publisher={Springer}
}

@article{bastin2025usefulness,
  title={The usefulness of viscosity for the robustness of boundary feedback control of an unstable fluid flow system},
  author={Bastin, Georges and Coron, Jean-Michel and Hayat, Amaury},
  journal={Automatica},
  volume={173},
  pages={112048},
  year={2025},
  publisher={Elsevier}
}

@article{karafyllis2022spill,
  title={Spill-free transfer and stabilization of viscous liquid},
  author={Karafyllis, Iasson and Krstic, Miroslav},
  journal={IEEE Transactions on Automatic Control},
  volume={67},
  number={9},
  pages={4585--4597},
  year={2022},
  publisher={IEEE}
}

@article{karafyllis2022feedback,
  title={Feedback stabilization of tank-liquid system with robustness to wall friction},
  author={Karafyllis, Iasson and Vokos, Filippos and Krstic, Miroslav},
  journal={ESAIM: Control, Optimisation and Calculus of Variations},
  volume={28},
  pages={81},
  year={2022},
  publisher={EDP Sciences}
}

@article{arfaoui2011boundary,
  title={Boundary stabilizability of the linearized viscous Saint-Venant system},
  author={Arfaoui, Hassen and Belgacem, F Ben and El Fekih, Henda and Raymond, Jean-Pierre},
  journal={Discrete Contin. Dyn. Syst. Ser. B},
  volume={15},
  number={3},
  pages={491--511},
  year={2011}
}

\end{document}